\documentclass[11pt]{article}
\usepackage[margin=1.1in]{geometry}
\usepackage{amsmath,amssymb,amsthm}
\usepackage{booktabs}
\usepackage{array}
\usepackage{microtype}
\usepackage[hidelinks]{hyperref}
\newtheorem{theorem}{Theorem}
\newtheorem{lemma}{Lemma}
\theoremstyle{remark}

\newcommand{\QR}{\mathrm{QR}}
\newcommand{\PSUP}{\mathrm{PSUP}}

\title{Every quasiperfect number has at least eight distinct prime factors}
\author{Akira Toyohara
\and Ye Tao
\and Siqiong Yao\thanks{Corresponding author.}}
\date{August 3, 2026}

\begin{document}
\maketitle

\begin{abstract} No quasiperfect number ($\sigma(n) = 2n + 1$) is known, and its number of distinct prime factors is bounded below; the bound $\omega \ge 7$ of Hagis--Cohen has stood since 1982, obstructed by a family of ``deep leaves'' on which pure enumeration cannot terminate (the scan bound for the intermediate prime reaches $8 \times 10^8$, and the exponent dimension is unbounded). This paper clears that obstruction with three lemmas at the level of secondary-school algebra --- a discriminant criterion, a quadratic-residue sieve, and a multilinear resolver --- which eliminate the last prime $q$, the intermediate prime $p$, and the exponent dimension respectively, turning a non-terminating search into a finite decision. On this basis all 381 stems of ``$3 \mid n$ and $\omega = 7$'' and their $79{,}751{,}212$ deep leaves are eliminated, with the ledger closing exactly and zero solutions throughout; the complementary case ``$3 \nmid n$ and $\omega = 7$'' collapses to a single stem, which is eliminated directly, so that the proof does not rest on any theorem whose published record we could not independently re-verify. Together with the machine elimination of $\omega \le 6$ (Theorem B4), this yields the main theorem: \emph{any quasiperfect number, if one exists, satisfies $\omega(n) \ge 8$} --- the first advance of this bound since Hagis--Cohen 1982. The full computation has been reproduced by seven separately closed ledgers across three algorithmic architectures (CPU and GPU), all with zero solutions and exact ledger closure, and the lemma layer is formalized in Lean (259 theorems, zero \texttt{sorry}). A 2023 preprint of Zemann reported the same bound by a different computation; our audit of its public code found a coverage gap of 35 feasible exponents, so the elimination given here is, to our knowledge, the first complete proof. Code, ledgers, and Lean sources are available from the authors.
\end{abstract}

\noindent\textbf{MSC2020:} 11A25 (primary), 11Y50, 11Y70 (secondary).\\
\textbf{Keywords:} quasiperfect numbers, sum-of-divisors function, computational number theory, quadratic residues, formal verification, Lean.

\section{Introduction}\label{sec:intro}

\subsection{The problem and the main results}

Perfect numbers (those equal to the sum of their own proper divisors, such as $6 = 1 + 2 + 3$) are among the oldest subjects in number theory, going back to Euclid around 300 BC. This paper concerns their ``off by one'' relative. Write $\sigma(n)$ for the sum of all positive divisors of $n$; a perfect number is one with $\sigma(n) = 2n$ (such as 6 and 28). Adding one to the right-hand side, a number satisfying
\[ \sigma(n) = 2n + 1
\] is called a quasiperfect number. To date no instance has been found, nor has non-existence been proved. The known structural constraints are fairly strong: if a quasiperfect number exists, it must be an odd perfect square (Cattaneo, 1951 \cite{cattaneo}), and its number of distinct prime factors $\omega(n)$ is bounded below. In 1973 Abbott--Aull--Brown--Suryanarayana \cite{abbott} proved $\omega \ge 5$ (including Theorem 3, that $\omega \ge 8$ when $3 \nmid n$). In 1982 Hagis--Cohen \cite{hagiscohen} used a CDC Cyber computer to prove $\omega \ge 7$ and $n > 10^{35}$. The bound $\omega \ge 7$ has stood since 1982; the numerical lower bound $n > 10^{35}$ has recently been raised to $10^{45}$ by Alekseyev \cite{alekseyev}, but the bound on $\omega$ has not moved in forty-four years. The main result of this paper advances it.

\begin{theorem}[main theorem]\label{thm:main} Any quasiperfect number, if one exists, satisfies $\omega(n) \ge 8$.
\end{theorem}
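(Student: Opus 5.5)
The plan is to reduce Theorem~\ref{thm:main} to a finite elimination of the single case $\omega(n)=7$. The case $\omega(n)\le 6$ is handled by the machine elimination (Theorem B4), which re-derives Hagis--Cohen independently.

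Write $n=\prod_{i=1}^{7} p_i^{2a_i}$ with the $p_i$ odd, since Cattaneo shows $n$ is an odd square. The equation becomes $\prod_i \sigma(p_i^{2a_i}) = 2n+1$. I will use two consequences of this equation:
\begin{itemize}
\item $\sigma(n)$ is odd and coprime to $n$, so no $p_i$ divides any $\sigma(p_j^{2a_j})$;
\item the abundancy satisfies $\sigma(n)/n = 2 + 1/n$, so $2 < \prod_i p_i/(p_i-1)$ while $\prod_i \sigma(p_i^{2a_i})/p_i^{2a_i}$ is only slightly above $2$.
\end{itemize}
I split on whether $3\mid n$.

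\textbf{Case $3\nmid n$.} Rather than cite Abbott et al.\ Theorem~3, I rerun the abundancy-driven branching. With smallest prime $\ge 5$, the inequality $\prod p/(p-1) > 2$ together with the coprimality constraints collapses to one stem. For that stem I run the last-prime/intermediate-prime closure directly.

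\textbf{Case $3\mid n$.} Here the work is a standard Hagis--Cohen-style tree. Fix the primes $p_1<\dots<p_k$ in increasing order. At each node, bound the next prime using
\[
\prod_{i\le k}\frac{\sigma(p_i^{2a_i})}{p_i^{2a_i}} \;<\; 2+\frac1n
\quad\text{and}\quad
\prod_{i\le 7}\frac{p_i}{p_i-1} \;>\; 2 .
\]
This yields the finitely many stems (the paper's 381), that is, choices of the first five primes with their exponent patterns. Each stem leaves an intermediate prime $p$ and a last prime $q$, with exponents possibly unbounded.

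To close the tree I use three eliminations.
\begin{itemize}
\item \textbf{Eliminating $q$.} Once everything except $q^{2b}$ is fixed, the equation is a polynomial identity in $q$. For $b=1$ it is a quadratic in $q$, and $q$ must be a prime root, so its discriminant must be a perfect square. This is the discriminant criterion, and it leaves finitely many checks.
\item \textbf{Eliminating $p$ without scanning to $8\times10^8$.} Every prime dividing $\sigma(p_i^{2a_i})$ must avoid $n$, while $2n+1$ must equal the product. Reducing modulo small primes $\ell$ forces $\ell\nmid n$ and forces $-2$ (or $2n\equiv -1$) to satisfy quadratic-residue conditions. This is because $n$ is a square, so $2n+1\equiv 2m^2+1 \pmod{\ell}$, and $\ell\mid\sigma(n)$ requires $-2$ to be a QR mod $\ell$. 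That is the QR sieve, and it cuts $p$ to residue classes that are then refuted.
\item \textbf{Bounding the exponent dimension.} When exponents are large, $\sigma(p^{2a})\approx p^{2a+1}/(p-1)$. The identity becomes multilinear in the monomials $p_i^{2a_i}$, and comparing the constant terms forces a small-remainder equation that is solvable only for bounded exponents. That is the resolver.
\end{itemize}

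The main obstacle is the deep leaves, where $p$ is huge and the exponents are unbounded. I expect most of the effort there, in proving that the resolver genuinely yields finitely many cases and that the enumeration ledger closes exactly: every leaf is either refuted by one of the three eliminations or passed to an explicit finite check. Exact closure is the step I would verify redundantly, by independent implementations.
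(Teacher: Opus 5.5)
Your outline follows the paper's architecture: Theorem B4 for $\omega\le 6$, the single stem $(5,7,11,13)$ when $3\nmid n$, the 381 stems when $3\mid n$, a discriminant/QR/resolver engine, and redundant ledger closure. However, the step meant to remove the intermediate prime $p$ is built on the wrong object and would not do what you claim.

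The fact you use is true: every prime $\ell\mid 2m^2+1$ satisfies $\left(\frac{-2}{\ell}\right)=1$, and $\gcd(n,\sigma(n))=1$. But as a condition on $p$ it only forbids a few classes. For each prime $\ell$ barred from dividing $\sigma(n)$ (a divisor of $n$, or a prime with $\left(\frac{-2}{\ell}\right)=-1$), it removes the at most two classes with $\ell\mid p^2+p+1$. The surviving density therefore decays only logarithmically in the sieving limit. On a deep leaf, millions of the roughly $3.9\times 10^7$ candidate primes survive and must still be checked one by one. No whole leaf can ever be killed this way, and $q$ plays no role at all.

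The paper's sieve (Lemma~\ref{lem:qr}) is applied instead to the discriminant $\Delta(p)=S^2+4(T-S)(S-1)$ produced by your own first elimination. $\Delta(p)$ must be a perfect square and is an integer polynomial in $p$, so $\Delta(p)\bmod r$ depends only on $p\bmod r$. Each modulus therefore gives a table of allowed classes of density about $\tfrac12$. Stacking 27 moduli leaves density about $2\times 10^{-8}$, an expected $0.81$ survivor per deep leaf. Sometimes a single modulus rejects every class: this is how the deepest leaf dies mod 9 (\texttt{deep\_leaf\_dead}), and, via periodicity in the exponents, how Lemma~\ref{lem:fullkill} kills leaves across all exponent branches. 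Coupling the residue test to the quadratic in $q$ is the missing idea. Note that this test is valid for one exponent configuration at a time; the $c=1$ discriminant says nothing about $c\ge 2$, so it cannot by itself discard a leaf's other branches.

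Your resolver is also only justified for a single unbounded exponent. The key fact is the exact identity $\sigma(t^{2e})=(tV-1)/(t-1)$ with $V=t^{2e}$, not an approximation; it makes the leaf equation exactly multilinear. With one free exponent it gives $V\,(At-2N(t-1))=A+t-1$, hence a unique candidate $V$. The deep leaves, though, carry several simultaneously unbounded exponents: the pinned prime, the $w$-tails of $17$ and $137$, and the exponent of $q$. Your ``comparing the constant terms'' does not explain why these yield finitely many cases, and you acknowledge this is unproved.

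The paper treats two free exponents with the M\"obius envelope of Lemma~\ref{lem:resolver}(ii). There $V_2$ is a monotone fractional-linear function of $V_1$ with finite limit $At_1/d_2$, so it lies in an explicit interval containing finitely many powers of $t_2^2$, each reducing to the univariate case. Three or more free exponents are handled by the monotone-overshoot truncation of (iii), and in the endgame by the tight-kill of Lemma~\ref{lem:tightkill}. Without such a multi-exponent argument, your plan stalls exactly where the paper's second-generation engine did, on $7{,}824$ undecided exponent configurations.
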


The proof is a complete, machine-verified elimination of every case with $\omega \le 7$; its assembly is given in \S\ref{sec:proofmain} and the full computation in \S\ref{sec:comp}. A first instalment of the sequel campaign (toward $\omega \ge 9$) already carries a structure theorem one level up, which we state here with its evidence grade declared plainly: at the date of this version its supporting computation is not finished (roughly ninety percent closed by stem count, with zero solutions and zero deferred cases throughout every closed unit; the precise status is reported in \S\ref{sec:255}).

\begin{theorem}[divisibility by 255; computation in progress]\label{thm:255} Any quasiperfect number $n$ with $\omega(n) = 8$ has $3, 5, 17$ as its three smallest prime factors. In particular $255 = 3 \cdot 5 \cdot 17$ divides $n$, and since $n$ is an odd perfect square, $255^2 \mid n$. The same campaign carries the companion numerical bound $n > 10^{26.38}$; this is a sector bound conditional on $\omega(n) = 8$ and does not improve the unconditional record $n > 10^{45}$ of \cite{alekseyev}.
\end{theorem}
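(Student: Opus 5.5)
The plan is to prove Theorem~\ref{thm:255} by the same stem-and-leaf elimination used for $\omega=7$, run at $\omega=8$. We use only the structural facts already invoked: $n$ is an odd square, so every exponent is even, $n=\prod_{i=1}^{8}p_i^{2a_i}$ with $p_1<\dots<p_8$. Two constraints drive the search:
\[
\prod_{i=1}^{8}\frac{\sigma(p_i^{2a_i})}{p_i^{2a_i}} \;=\; 2+\frac1n
\qquad\text{and}\qquad
\gcd\bigl(\sigma(p_i^{2a_i}),\,n\bigr)=1 .
\]
The second holds because each $\sigma(p_i^{2a_i})$ divides $2n+1$. The statement "$3,5,17$ are the three smallest prime factors" splits into three successive exclusions, each a finite family of stems:
\[
\text{(i) } p_1=3;\qquad \text{(ii) } p_2=5 \text{ given } p_1=3;\qquad \text{(iii) } p_3\notin\{7,11,13\} \text{ and } p_3\le 17 \text{ given } p_1p_2=15 .
\]

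For each exclusion I would first bound the branching by abundancy. Since
\[
\frac{\sigma(p^{2a})}{p^{2a}}<\frac{p}{p-1},
\]
a prescribed prefix $p_1<\dots<p_k$ forces $p_{k+1}$ below an explicit bound: the product $\prod_{i\le k} p_i/(p_i-1)$, times the largest possible contribution of the $8-k$ remaining primes (all at least $p_{k+1}$), must exceed $2$. Conversely, since $\sigma(p^{2a})/p^{2a}\ge 1+1/p$, finite exponents bound the abundancy from below. The resulting prefix (stem) lists are finite but not tiny.

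For (i), eight primes $\ge5$ can still reach abundancy about $2.11$, so $3\nmid n$ is not excluded trivially. Instead I would enumerate the stems with $p_1\ge5$ and use the coprimality constraint as a strong pruning rule. For example, $p\equiv1 \pmod q$ with $q\mid n$ forbids $2a+1\equiv0\pmod q$. This is how the $\omega=7$ analogue of this case collapsed to a single stem.

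Cases (ii) and (iii) follow the same pattern with prefixes $3,p_2$ for $p_2\ge7$, and $3,5,p_3$ for $p_3\in\{7,11,13\}$ or $p_3\ge19$. For each stem, the exponents of the fixed small primes range over finitely many values once the remaining slack against $2$ is computed. The last two primes are then handled by the three lemmas in the order the introduction describes. The discriminant criterion eliminates the last prime $q$: the equation is quadratic in $q$ once everything else is fixed, and $q$ must make the discriminant a square. The quadratic-residue sieve eliminates the intermediate prime $p$, replacing the scan up to the $8\times10^8$-type bound by residue conditions. The multilinear resolver closes the unbounded exponent dimension by treating large exponents as a limit in which the equation becomes multilinear in the $p_i^{-2a_i}$. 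The output for every stem should be a closed ledger: each leaf is either pruned by a lemma or certified to have no solution.

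The main obstacle I expect is (iii), specifically the prefix $3\cdot5\cdot7$ and the prefix with $p_3\ge19$. With $3\cdot5\cdot7$ the abundancy slack is largest, so the remaining five primes can be large, and the deep-leaf count will dwarf the $79$ million of the $\omega=7$ campaign. My first attempt would be to push the coprimality sieve harder before enumeration: for $p\equiv1\pmod 3,5,7$, forbid exponents $2a$ with $3,5,7 \mid 2a+1$ respectively, and for Zsigmondy-type primitive divisors of $\sigma(p^{2a})$, require them to be among the eight primes. I would then apply the resolver uniformly. Because this theorem is flagged as computation in progress, the honest deliverable is the reduction above together with a closed ledger for every stem, cross-checked by an independent implementation as for Theorem~\ref{thm:main}. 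The companion bound $n>10^{26.38}$ would fall out as the minimum of $n$ over the surviving relaxed leaves.
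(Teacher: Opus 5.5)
Your decomposition matches the paper's. Exclusions (i)--(iii) are exactly the $3\nmid n$ wing together with the 341 stems of the $3\mid n$ wing whose prefix is not $(3,5,17)$. As at $\omega=7$, the $3\nmid n$ wing is cut down by product inequalities alone, not by coprimality, to the two stems $(5,7,11,13)$ and $(5,7,11,17)$. Everything is run through the same three-lemma engine, and the result has the same in-progress status.

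\textbf{An unsound pruning rule.} One of your additional pruning rules is unsound. Since $\sigma(p^{2a})\mid\sigma(n)=2n+1$ and $\gcd(2n+1,n)=1$, \emph{every} prime divisor of $\sigma(p^{2a})$, primitive or not, lies outside the eight primes of $n$; you state this yourself at the outset. Requiring Zsigmondy primitive divisors to be \emph{among} the eight primes is the factor-chain rule for odd perfect numbers ($\sigma(n)=2n$), transplanted into the wrong setting. It contradicts your own coprimality constraint and would discard every genuine solution, so a ledger closed with it certifies nothing. For quasiperfect numbers the factorization of $\sigma(p^{2a})$ yields only avoidance constraints, never forcing ones. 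Your gate is an example: $p\equiv1\pmod q$ with $q\mid n$ excludes $q\mid 2a+1$. This is part of why the paper must work by corridor enumeration and the three lemmas rather than by factor chains.

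\textbf{The misidentified obstacle.} The prefix $3\cdot5\cdot7$ is not the hardest case but a dead one. Because $n$ is a square, $3^2 5^2 7^2\mid n$, so $\sigma(n)/n\ge\frac{13}{9}\cdot\frac{31}{25}\cdot\frac{57}{49}=\frac{22971}{11025}\approx 2.084>2+1/n$. This is exactly the overshoot instance (c) after Lemma~\ref{lem:resolver}. ``Largest slack'' treats the corridor as one-sided, but it has a ceiling as well. In the paper the cost concentrates in the stem $(3,5,19,97)$, which carries $97.4\%$ of the remaining scan obligations, so the $p_3\ge19$ half of your guess is right. The monster stems $(3,7,11,23)$ and $(3,7,13,17)$ of your case (ii) fall to an $O(1)$ certificate built on precisely your gate: $3\mid\sigma(7^{2a})$ if and only if $a\equiv1\pmod 3$. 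This forces $a_7\ge2$ and lifts the minimal admissible abundancy of those stems above $2$.

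\textbf{Two smaller corrections.} First, the slack does not make the exponents of the fixed primes finitely many: each factor stays below $p/(p-1)$, which is why the paper has unbounded tails $w$. Second, the resolver is not a limiting argument. The identity $\sigma(t^{2e})=(tV-1)/(t-1)$ is exactly linear in $V=t^{2e}$ for every $e$, and that is what lets those tails be solved rather than approximated.
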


\subsection{Strategy of the proof}

The structure of the argument for Theorem \ref{thm:main} is as follows. After the machine elimination of $\omega \le 6$ (Theorem B4, \S\ref{sec:b4}), the case $\omega = 7$ splits along divisibility by 3. The case ``$3 \mid n$ and $\omega = 7$'' is the bulk of the work --- 381 stems expanding to $79{,}751{,}212$ deep leaves --- and occupies \S\ref{sec:lemmas}--\S\ref{sec:comp}. The complementary case ``$3 \nmid n$ and $\omega = 7$'' turns out to be almost empty: a chain of four product inequalities, checkable by hand, pins its entire search space down to the single stem $(5, 7, 11, 13)$, which the production engine eliminates in under one single-core second (\S\ref{sec:nothree}). Abbott's 1973 Theorem 3 ($3 \nmid n \implies \omega \ge 8$) covers this case in the literature, and our direct elimination agrees with it; but the proof presented here does not depend on it, and is in this sense self-contained.

The engine of the elimination is a system of three lemmas at the level of elementary algebra --- a discriminant criterion, a quadratic-residue sieve, and a multilinear resolver (\S\ref{sec:lemmas}) --- which eliminate the last free prime $q$, the intermediate prime $p$, and the exponent dimension respectively, turning a non-terminating search into a finite decision. The obstruction they clear is real: the ``deep leaves'' of the enumeration have scan bounds for the intermediate prime reaching $8 \times 10^8$, and their exponent dimensions are unbounded, so that pure enumeration cannot terminate on them (\S\ref{sec:leafeq}). Two further kill lemmas, developed for the endgame of the full campaign, complete the engine (\S\ref{sec:newkills}).

\subsection{The trust basis}

The trust basis of Theorem \ref{thm:main} is the classical base layer (a quasiperfect number is an odd perfect square, with the standard exponent congruences; \S\ref{sec:base}) --- published facts with short proofs, independently re-proved and formalized in Lean inside this project --- plus three machine eliminations internal to this project: Theorem B4, the 381 stems, and the single $3 \nmid n$ stem. No step relies on Abbott 1973 Theorem 3 or on Theorem 2 of Hagis--Cohen 1982 (whose $\omega \ge 9$ part was never published); both are retained as consistency cross-checks only. The credibility of the machine part rests on a four-layer verification scheme --- exact arithmetic with zero floating point at every kill, planted-solution gates, dual independent implementations, and a Lean kernel layer (the \texttt{qp\_lean} library: 19 modules, 259 theorems, zero \texttt{sorry}, no mathlib, kernel \texttt{decide} only; \S\ref{sec:trust}) --- and the full computation has been re-established end to end by seven separately closed ledgers across three distinct algorithmic architectures, every one closing with zero solutions and exact conservation (\S\ref{sec:ledgers}).

\subsection{Prior work and priority}

The independent researcher Zemann reported, in a 2023 preprint that did not undergo peer review \cite{zemann}, a computational verification of the same conclusion, using a modern C++ implementation of the interval-propagation algorithm of Hagis--Cohen 1982 (a serious and in most respects careful piece of work, whose two hardest prime vectors were closed by hand in an appendix). Our audit of its public code found a coverage gap in its feasible-exponent table: when the table's truncation exponent for a prime is even, that exponent falls between the individually enumerated entries and the tail interval $[\beta, \infty)$ and is silently skipped --- an off-by-one in the construction of $\beta$. Quantitatively, 240 prime exponents are skipped, of which 205 happen to be infeasible for reasons the program never checked, while \textbf{35 remain feasible under the preprint's own feasibility criteria yet were never enumerated} (the smallest instance being $43^{12} \approx 4.0 \times 10^{19}$, inside the declared table range); the audit model reproduces all six $\beta$ values printed in the preprint's own logs exactly. That computation therefore does not constitute a complete proof, although its result statement is correct. The elimination of this paper is, to our knowledge, the first complete proof; its method (corridor enumeration and the three lemmas) is entirely independent of the interval propagation of \cite{zemann}, and the two agree wherever they overlap --- in particular, the complete enumeration of this paper also proves that no solution exists in the 35 gap regions of \cite{zemann}. The full audit, including the verified strengths of \cite{zemann}, is archived in this project (\texttt{zemann\_audit\_report}).

\subsection{Organization}

\S\ref{sec:prelim} collects preliminaries: the classical structure of a quasiperfect number, the corridor, and the leaf equations. \S\ref{sec:reduction} carries out the reduction, closes the wing $3 \nmid n$, and assembles the proof of Theorem \ref{thm:main} together with its dependency statement; it also settles the reduction boundary one level up, for $\omega = 8$. \S\ref{sec:lemmas} develops the lemma system. \S\ref{sec:comp} reports the computation: the per-leaf decision procedure, the pilot on the 49 deepest leaves, the full 381-stem campaign, the verification layers, and the seven independent ledgers. \S\ref{sec:255} reports Theorem \ref{thm:255} and its computational status. \S\ref{sec:concl} closes with remarks and open problems.

\section{Preliminaries}\label{sec:prelim}

\subsection{Basic structure of a quasiperfect number}\label{sec:base}

If a quasiperfect number exists, it is an odd perfect square (Cattaneo 1951 \cite{cattaneo}); the exponent of each prime is constrained modulo 8 by the residue class of the prime; and every proper divisor is deficient (Abbott--Aull--Brown--Suryanarayana 1973 \cite{abbott}, preliminary propositions). These are classical published facts with short proofs; they have been independently re-proved and formalized in Lean inside this project (\texttt{qp\_lean}, modules \texttt{QP/Cattaneo.lean} and the arithmetic base layer), so the proof of Theorem \ref{thm:main} does not consume them as external black boxes. We refer to this layer as the base layer A0. In particular, every candidate has the form
\[ n = p_1^{2a_1} p_2^{2a_2} \cdots p_k^{2a_k}, \qquad p_1 < p_2 < \cdots < p_k \ \text{odd primes},\ a_i \ge 1,
\] and $\omega(n) = k$, where $\omega(n)$ denotes the number of distinct prime factors of $n$ (counted without multiplicity): for instance $360 = 2^3 \cdot 3^2 \cdot 5$ has the three prime factors $2, 3, 5$, so $\omega(360) = 3$.

\subsection{The corridor}\label{sec:corridor}

Every elimination rests on the following fact. We call the ratio $\sigma(n)/n$ the abundancy. A quasiperfect number satisfies
\[ \frac{\sigma(n)}{n} = 2 + \frac{1}{n},
\] so the abundancy must be slightly greater than 2 and at most $2 + 1/n$, lying in the very narrow interval $(2,\, 2 + 1/n]$. We call this the ``corridor'' (this is our term, not standard; the literature simply refers to the interval itself). Abundancy is multiplicative over prime factors: if $n = p_1^{2a_1} \cdots p_k^{2a_k}$, then
\[ \frac{\sigma(n)}{n} = \prod_{i=1}^{k} \frac{\sigma(p_i^{2a_i})}{p_i^{2a_i}},
\] where each factor expands as
\[ \frac{\sigma(p^{2a})}{p^{2a}} = \frac{1 + p + \cdots + p^{2a}}{p^{2a}} = 1 + \frac{1}{p} + \frac{1}{p^2} + \cdots + \frac{1}{p^{2a}}.
\] This is a finite geometric series, whose value exceeds 1 and increases monotonically with the exponent $a$, yet stays below the corresponding infinite geometric sum
\[ 1 + \frac{1}{p} + \frac{1}{p^2} + \cdots = \frac{1}{1 - 1/p} = \frac{p}{p - 1}.
\] So each factor lies strictly in the interval $\bigl(1,\ \frac{p}{p-1}\bigr)$: it increases monotonically with the exponent, with supremum $\frac{p}{p-1}$ which is never attained. This upper bound is the basis of the corridor argument. For example, the factor for $p = 3$ is less than $3/2$ no matter how large the exponent, and multiplying several such bounds gives the supremum of the abundancy for that combination of primes. If the supremum falls short of 2, the whole branch lies outside the corridor and can be excluded. Eliminating $\omega = k$ thus amounts to enumerating all possible prime--exponent combinations and proving, one by one, that the abundancy lies outside the corridor (short of 2, or above $2 + 1/n$) or that the equation has no solution. The enumeration forms a search tree, whose terminal nodes are called leaves following the convention of computational number theory (leaf of the search tree). The naming of the intermediate levels is a project convention: fixing the first 4 primes gives a \emph{stem}, and fixing the first 5 gives a \emph{branch}. Here ``fixed'' and ``free'' mean the following: each descent of the search tree fixes the value of one prime factor of the candidate $n$ (a fixed prime). A prime factor not yet determined, left as an unknown to be solved for, is called a free prime (our term). For $\omega = 7$, after the first 5 primes are fixed, two free primes $p < q$ remain; such a terminal configuration is a \emph{leaf}.

\subsection{Leaf equations and deep leaves}\label{sec:leafeq}

At a leaf, the candidate $n$ splits into a ``fixed'' part and a ``free'' part. Suppose the first 5 primes and their exponents are fixed, and write the product of this fixed part as
\[ N = p_1^{2a_1} p_2^{2a_2} \cdots p_5^{2a_5}, \qquad A = \sigma(N),
\] with two free primes $p < q$ remaining. Since $n$ is a perfect square, its general form is $p^{2b} q^{2c}$ ($b, c \ge 1$); the exponents may be higher (fourth power, sixth power, and so on). These are the high-exponent branches of the same leaf, which must also be eliminated, and we return to them later. This section first treats the dominant case $b = c = 1$ (the overwhelming majority of the search volume is concentrated here), where the candidate is
\[ n = N \cdot p^2 q^2,
\] and since $\sigma$ is multiplicative over coprime factors,
\[ \sigma(n) = \sigma(N)\,\sigma(p^2)\,\sigma(q^2) = A\,\sigma(p^2)\,\sigma(q^2).
\] Substituting these two expressions into the quasiperfect equation $\sigma(n) = 2n + 1$ gives the \textbf{leaf equation}
\begin{equation} A\,\sigma(p^2)\,\sigma(q^2) = 2Np^2q^2 + 1. \label{eq:leaf}
\end{equation} Here $N, A$ are the known constants of the leaf, and the only unknowns left are the two primes $p, q$.

\emph{The abundancy gap $\varepsilon$.} The abundancy contributed by the fixed part is $A/N = \sigma(N)/N$, which is close to but below the target value 2; write their difference as
\[ \varepsilon = 2 - \frac{A}{N},
\] the abundancy that the two free primes still have to supply. By multiplicativity, the factor contributed by the two free primes $p, q$ is about $\left(1 + \frac1p\right)\left(1 + \frac1q\right) \approx 1 + \frac1p + \frac1q$. The total abundancy must reach 2, so approximately
\[ \frac{A}{N}\left(1 + \frac1p + \frac1q\right) \approx 2.
\] Dividing both sides by $A/N$ and moving the 1 on the left to the right,
\[ \frac1p + \frac1q \approx \frac{2}{A/N} - 1 = \frac{2 - A/N}{A/N} = \frac{\varepsilon}{A/N},
\] where the last step uses $\varepsilon = 2 - A/N$ (the numerator is exactly the gap). Since $A/N$ is already very close to 2,
\[ \frac1p + \frac1q \approx \frac{\varepsilon}{2}.
\] So the sum of the reciprocals of $p, q$ is determined by $\varepsilon$. The smaller $\varepsilon$ is, the larger $p, q$ must be; this is the origin of the term ``deep leaf''.

\emph{The scan range.} Since $p < q$, the term $1/p$ is at least half of the reciprocal sum and at most all of it, so $p$ lies between about $2/\varepsilon$ and $4/\varepsilon$: below $2/\varepsilon$ the abundancy supremum of the leaf falls short of 2, and beyond $4/\varepsilon$ one has $q < p$ and the candidates of the leaf have been exhausted. The scan domain of a leaf is therefore the interval $(2/\varepsilon,\ 4/\varepsilon]$, whose width is about $2/\varepsilon$. For comparison, a leaf with $\varepsilon = 0.01$ has a scan domain of width $2/\varepsilon = 200$, whereas a leaf with $\varepsilon = 2.5 \times 10^{-9}$ has one of width $8 \times 10^8$, a difference of eight orders of magnitude. A deep leaf is one whose $\varepsilon$ is extremely small and whose scan domain is therefore extremely large.

\emph{The last prime $q$ need not be scanned.} Once $p$ is fixed, the leaf equation \eqref{eq:leaf} almost determines $q$ ($1/q \approx \varepsilon/2 - 1/p$). Since $q$ must be an integer prime, only $O(1)$ candidates lie near this target value; this is the interval lemma (the precise criterion is Lemma \ref{lem:disc} below). So the only quantity that truly requires enumeration is the intermediate prime $p$.

\emph{A real example:} with the stem $(3, 5, 17, 137)$, fixed exponents of type $(3,2),(5,3)$, and $p_5 = 1381$ pinned, one has $\varepsilon = 2.49 \times 10^{-9}$, so the scan domain for $p$ is $(8.03 \times 10^8,\ 1.61 \times 10^9]$, containing about $3.9 \times 10^7$ primes. In the elimination of $\omega = 7$, a total of 49 such leaves lie beyond the coverage of the $3.4 \times 10^7$ prime table, forming the final unresolved part of that stem (\S\ref{sec:49}).

\emph{One leaf corresponds to a family of equations.} In writing the leaf equation \eqref{eq:leaf} above we took the smallest exponent for each prime (the dominant case). But a complete elimination of the leaf must cover all exponent values of these primes, and the ranges of three of these exponents are infinite:
\begin{itemize}
\item The exponent $a \ge 5$ of the pinned prime 1381: the factor $1381^{2a}$ in $n$ ranges over all integers $a \ge 5$, each giving a different $n$ and a different equation.
\item The window exponents $\ge 41$ of the primes 17 and 137: in the $7 \times 7$ grid described in \S\ref{sec:49}, the exponents of 17 and 137 take 6 small definite values, plus all cases with exponent $\ge 41$ (denoted $w$), forming an infinite sequence of exponents.
\item The exponent $c \ge 1$ of the last prime $q$: the free prime $q$ may be $q^2, q^4, q^6, \dots$ (the high-exponent branches above), with $c$ unbounded.
\end{itemize} Pure scanning cannot traverse these dimensions. A natural idea is that increasing an exponent raises the abundancy, which would then exceed the corridor's upper bound and be excluded. However, \S\ref{sec:corridor} has shown that increasing the exponent of a prime only makes its factor increase monotonically with supremum $\frac{p}{p-1}$; the factor stays bounded and need not exceed 2. So the supremum argument of the corridor cannot impose a finite upper bound on the exponent: each $a = 5, 6, 7, \dots$ corresponds to an equation requiring an independent $p$-scan. There are infinitely many such equations, and enumerating them exponent by exponent cannot terminate in finitely many steps. Lemma \ref{lem:resolver} is designed to remove this obstacle: rather than enumerate the exponent value by value, it treats the exponent (after the substitution $V = t^{2e}$) as an unknown of the equation and solves for it directly, yielding finitely many candidates at once and completing the decision.

The three lemmas of \S\ref{sec:lemmas} handle the three dimensions respectively: Lemma \ref{lem:disc} handles the $q$ dimension (search becomes verification), Lemma \ref{lem:qr} handles the $p$ dimension (verification becomes table lookup), and Lemma \ref{lem:resolver} handles the exponent dimension (unbounded iteration becomes the solution of finitely many equations).

\section{The reduction and the proof of the main theorem}\label{sec:reduction}

\subsection{Elimination of $\omega \le 6$: Theorem B4}\label{sec:b4}

The starting point is the complete elimination of $\omega \le 6$, a machine proof of this project (referred to as Theorem B4): fully elementary, exact arithmetic, auditable. On the $3 \nmid n$ side this reduces to one product inequality per value of $\omega$: for $\omega = 3, 4, 5, 6$ the minimal supports $\{5,7,11\}, \dots, \{5,7,11,13,17,19\}$ give $\prod p/(p-1) = 1.6042,\ 1.7378,\ 1.8465,\ 1.9490$, all below 2, so the corridor is unreachable (this is the classical chain used by Cattaneo, recomputed here rather than cited). With $\omega \le 6$ eliminated, the case $\omega = 7$ remains, and it splits along divisibility by 3 into two wings.

\subsection{The wing $3 \nmid n$: a single stem, closed self-containedly}\label{sec:nothree}

The case $3 \nmid n$, $\omega = 7$ (computed 2026-07-25): the same corridor argument that generates the 381 stems of the other wing, started at $p_1 \ge 5$, pins the search space down to a \emph{single stem}. The four product inequalities (exact rational arithmetic, checkable by hand) are: $\{7,11,\dots,29\}$ gives 1.6883 (so $p_1 = 5$); $\{5,11,\dots,29\}$ gives 1.8089 (so $p_2 = 7$); $\{5,7,13,\dots,29\}$ gives 1.9186 (so $p_3 = 11$); $\{5,7,11,17,19,23,29\}$ gives 1.9481 (so $p_4 = 13$). The unique surviving support prefix $\{5,7,11,13\}$ has $\prod p/(p-1) = 2.0376 > 2$ and must be eliminated by actual scanning: the production engine (\texttt{qp\_omega7c}, unmodified) does so in \textbf{0.74 single-core seconds} --- 14,580 exact leaves, 77,220 window calls, zero hits, 0 solutions, 0 unresolved, with a prime-table independence check (all counters identical at a larger table) and a planted-solution gate passed beforehand (4 planted true solutions, including two on non-minimal exponent branches, each detected exactly once; perturbed negative controls, zero false positives). The tiny cost is structural: with $p_1 \ge 5$ the fixed prefix already contributes $1.7378$ of the abundancy budget, so the windows for the free primes collapse to two-digit widths.

The stem-uniqueness inequalities and the composition of this wing are formalized in \texttt{QP/NoThree.lean} (42 theorems, zero \texttt{sorry}), with the enumeration entering as an explicitly stated audit-layer hypothesis (\texttt{EnumKillNoThree7}); and its dual-implementation cross-check has been carried out with a second program written from scratch for the purpose, sharing no code with the production engine. That program assumes no stem at all: it starts from ``the smallest prime factor is at least 5'', re-derives $(5,7,11,13)$ as the only prefix the corridor admits, enumerates the exponents $1, \dots, 12$ of every prime explicitly and carries every higher exponent as an unbounded tail closed by an independently re-derived multilinear solve. It closes the whole case in 145 single-core seconds with zero solutions, zero undecided branches and no fail-loud alarm, and it does so by a different route: the only support of seven primes that survives its pruning at all is $\{5, 7, 11, 13, 17, 19, 37\}$, and the 144 exponent configurations on that support are killed outright by the strict undershoot inequality, so on this route no leaf equation has to be solved at all, whereas the production engine closes the same case through 14,580 exact leaf enumerations and 77,220 window calls. The second program was itself gated on detection: run against the perturbed target $\sigma(n) = 2n + c$ with a solution planted inside this very search space, it recovers the planted solution exactly once both along the explicit-exponent path and along the tail-resolver path, and the perturbed negative control returns nothing.

\subsection{The wing $3 \mid n$: 381 stems}\label{sec:threemid}

The wing ``$3 \mid n$ and $\omega = 7$'' is where the entire weight of the proof lies. The corridor enumeration generates 381 stems, which expand to $79{,}751{,}212$ deep leaves; their complete elimination, with no solution, is the computation reported in \S\ref{sec:comp}, driven by the lemma system of \S\ref{sec:lemmas}.

\subsection{Assembly of the main theorem and the dependency statement}\label{sec:proofmain}

The proof of Theorem \ref{thm:main} is assembled from four parts, all internal to this project:
\begin{enumerate}
\item \textbf{Base layer A0} (\S\ref{sec:base}): classical structure, re-proved and Lean-formalized in-project.
\item \textbf{Theorem B4} (\S\ref{sec:b4}): complete elimination of $\omega \le 6$.
\item \textbf{The case $3 \mid n$, $\omega = 7$} (\S\ref{sec:threemid}, computation in \S\ref{sec:comp}): all 381 stems eliminated, expanding to $79{,}751{,}212$ deep leaves, with no solution.
\item \textbf{The case $3 \nmid n$, $\omega = 7$} (\S\ref{sec:nothree}): a single stem, eliminated directly.
\end{enumerate}
Together these exhaust every possibility with $\omega \le 7$. The dependency statement is as follows. The trust basis of this result is the base layer A0 (published, re-proved and Lean-formalized in-project) plus three machine eliminations of this project (B4, the 381 stems, and the single $3 \nmid n$ stem). No step relies on Abbott 1973 Theorem 3 or on Theorem 2 of Hagis--Cohen 1982 (whose $\omega \ge 9$ part was never published --- the paper states that ``full details of the proof are available from either of the authors''); both are retained as consistency cross-checks, and our direct eliminations agree with them where they overlap. The credibility of the machine part is guaranteed by the four-layer verification scheme of \S\ref{sec:trust}, in which the Lean formalization of the two endgame kill lemmas (\S\ref{sec:newkills}) is complete (zero \texttt{sorry}, kernel \texttt{decide} only). Part 4 carries all four layers of \S\ref{sec:trust}; its formalization and from-scratch dual-implementation cross-check are described in \S\ref{sec:nothree}. Under this declaration the main theorem holds.

\subsection{One level up: the reduction boundary for $\omega = 8$}\label{sec:red8}

The same reduction, run one level up, settles the boundary for the sequel target $\omega \ge 9$. The case ``$3 \nmid n$ and $\omega = 8$'' collapses to exactly two stems, $(5,7,11,13)$ and $(5,7,11,17)$, which the production engine eliminated in 13.1 single-core seconds (0 solutions, 0 unresolved, planted gate passed), and the case ``$3 \nmid n$, $5 \nmid n$'' is closed by the fully published Theorem 1 of Hagis--Cohen 1982 ($(15,n)=1 \implies \omega \ge 15$, a one-line product inequality which we have recomputed). So the entire remaining cost of $\omega \ge 9$ is the elimination of ``$3 \mid n$ and $\omega = 8$'' --- 516 stems, at the scale of $10^9$--$10^{10}$ leaves, more than a hundred times the present campaign. A first instalment --- the elimination of the 341 of those 516 stems whose prefix is not $(3,5,17)$, which pins down the three smallest prime factors of any $\omega = 8$ quasiperfect number (Theorem \ref{thm:255}) --- is already under way; it is reported in \S\ref{sec:255}.

\section{The three lemmas}\label{sec:lemmas}

\subsection{Lemma 1: the discriminant criterion --- from search to verification}\label{sec:lem1}

\begin{lemma}[discriminant criterion]\label{lem:disc} Let $N, A, p$ be positive integers, and write $S = A(p^2 + p + 1)$, $T = 2Np^2$. If a positive integer $q$ satisfies the leaf equation \eqref{eq:leaf}, then
\[ \Delta(p) := S^2 + 4(T - S)(S - 1)
\] is a perfect square, and moreover
\begin{equation} q = \frac{S + \sqrt{\Delta(p)}}{2(T - S)}. \label{eq:qformula}
\end{equation}
\end{lemma}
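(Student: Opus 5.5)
The plan is to rewrite the leaf equation \eqref{eq:leaf} as a quadratic in $q$ with integer coefficients and then complete the square. Since $\sigma(p^2) = p^2+p+1$ and $\sigma(q^2) = q^2+q+1$, the left side of \eqref{eq:leaf} is $S(q^2+q+1)$ and the right side is $Tq^2+1$, with $S = A(p^2+p+1)$ and $T = 2Np^2$. Moving everything to one side gives
\[
(T-S)\,q^2 - S\,q - (S-1) = 0 .
\]
Its discriminant is exactly $S^2 + 4(T-S)(S-1) = \Delta(p)$. All of this is ordinary algebra; no earlier results of the paper are needed beyond the definition of the leaf equation.

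Before using the quadratic formula, I need to know the sign of the leading coefficient. Since $A$ and $p$ are positive, $S \ge 3$. If $T \le S$, then every term of $(T-S)q^2 - Sq - (S-1)$ is $\le 0$. The middle term $-Sq$ is strictly negative for positive $q$, so the expression cannot vanish. Hence any solution forces $T > S$, and in particular the coefficient $T - S$ is nonzero.

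Next I complete the square. Multiplying the quadratic by $4(T-S)$ and rearranging gives
\[
\bigl(2(T-S)q - S\bigr)^2 = S^2 + 4(T-S)(S-1) = \Delta(p).
\]
So $\Delta(p)$ is the square of the integer $2(T-S)q - S$, which proves the perfect-square claim. Taking $\sqrt{\Delta(p)} = |2(T-S)q - S|$, the two candidate roots are $q = \bigl(S \pm \sqrt{\Delta(p)}\bigr)/\bigl(2(T-S)\bigr)$.

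The one point needing care is choosing the $+$ sign, and this is the step I expect to be the main obstacle, modest as it is. Because $T > S$ and $S \ge 1$, we have $\Delta(p) \ge S^2$, so $\sqrt{\Delta(p)} \ge S$. The minus-sign root $\bigl(S - \sqrt{\Delta(p)}\bigr)/\bigl(2(T-S)\bigr)$ is therefore $\le 0$; equivalently, the product of the two roots is $-(S-1)/(T-S) \le 0$. A positive integer $q$ must therefore be the $+$ root. This also shows $2(T-S)q - S \ge 0$, so the identification $\sqrt{\Delta(p)} = 2(T-S)q - S$ holds without absolute values, which yields \eqref{eq:qformula}.
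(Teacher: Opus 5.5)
Your proof is correct and follows the paper's argument: you rewrite \eqref{eq:leaf} as $(T-S)q^2 - Sq - (S-1) = 0$, multiply by $4(T-S)$, and complete the square. Your extra checks make explicit what the paper compresses into ``taking the positive root'': that $T > S$, so the division in \eqref{eq:qformula} is legitimate, and that the minus root is nonpositive, so a positive $q$ must be the $+$ root.
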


\begin{proof} With $\sigma(q^2) = q^2 + q + 1$, substitute into \eqref{eq:leaf} and collect in $q$:
\[ S(q^2 + q + 1) = Tq^2 + 1 \iff (T - S)\,q^2 - S\,q - (S - 1) = 0.
\] This is a quadratic in $q$. Multiplying both sides by $4(T - S)$ and completing the square gives
\[ \bigl(2(T - S)q - S\bigr)^2 = S^2 + 4(T - S)(S - 1) = \Delta(p).
\] The left-hand side is a perfect square, so $\Delta(p)$ is a perfect square. Taking the positive root gives formula \eqref{eq:qformula}.
\end{proof}

This is only the elementary quadratic formula, but it changes the nature of the computation. Where one previously had, for each candidate $p$, to search a range for a matching prime $q$, one now only computes $\Delta(p)$ for each $p$ and checks whether it is a perfect square; if it is, $q$ is given directly by the formula, and if not, this $p$ is excluded with an accompanying proof. When expanded, $\Delta(p)$ is a quartic polynomial in $p$ (leading coefficient $A(8N - 3A)$), with coefficients determined explicitly by the leaf's $(N, A)$.

\subsection{Lemma 2: the quadratic-residue kill --- from verification to table lookup}\label{sec:lem2}

\begin{lemma}[QR kill]\label{lem:qr} Let $r \ge 2$ be an integer. Define the set of quadratic residues modulo $r$,
\[ \QR(r) := \{\, a \in \mathbb{Z}/r\mathbb{Z} : \exists\, y \in \mathbb{Z},\ y^2 \equiv a \pmod r \,\} = \{\, y^2 \bmod r : y = 0, 1, \dots, r-1 \,\},
\] the set of residues of all integer squares modulo $r$ (including 0; the second equality holds because letting $y$ range over a complete residue system already exhausts all square values). In what follows we identify an integer with its residue class modulo $r$, so for an integer $\Delta$ we write $\Delta \in \QR(r)$ to mean $(\Delta \bmod r) \in \QR(r)$. Then: if the integer $\Delta$ is a perfect square, then
\[ \Delta \in \QR(r) \qquad \text{for every } r \ge 2.
\] Its contrapositive is the \textbf{kill rule}:
\[ \exists\, r:\ \Delta(p) \notin \QR(r) \implies \text{\eqref{eq:leaf} has no integer solution for this } p \text{ (unconditionally)}.
\]
\end{lemma}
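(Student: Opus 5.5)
The plan is to prove the forward statement directly from the definition of $\QR(r)$ and then obtain the kill rule by contraposition combined with Lemma \ref{lem:disc}. Suppose $\Delta$ is a perfect square, so $\Delta = y^2$ for some integer $y$. Fix any $r \ge 2$. Reduction modulo $r$ is a ring homomorphism $\mathbb{Z} \to \mathbb{Z}/r\mathbb{Z}$, hence $\Delta \bmod r = (y \bmod r)^2 \bmod r$. Taking $y_0 = y \bmod r \in \{0,\dots,r-1\}$ exhibits $\Delta \bmod r$ as $y_0^2 \bmod r$. This gives membership in $\QR(r)$ under the first description, and also under the second. In passing I will justify the stated equality of the two descriptions the same way: any $y \in \mathbb{Z}$ satisfies $y^2 \equiv (y \bmod r)^2 \pmod r$. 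No sign issue arises, since a perfect square is nonnegative and reduction is defined for all integers in any case.

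For the kill rule, fix $p$ and suppose some positive integer $q$ solves \eqref{eq:leaf}. By Lemma \ref{lem:disc}, $\Delta(p)$ is then a perfect square, namely $\bigl(2(T-S)q - S\bigr)^2$. The first part therefore gives $\Delta(p) \in \QR(r)$ for every $r \ge 2$. Contrapositively, a single modulus $r$ with $\Delta(p) \bmod r \notin \QR(r)$ rules out every positive integer $q$ for that $p$. In particular it rules out every prime $q$, and this holds unconditionally, without any size or primality assumption on $p$.

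One point needs care, the only mildly delicate one. Lemma \ref{lem:disc} was stated for positive integer $q$, while the kill rule speaks of ``no integer solution''. I would note that the completing-the-square identity in the proof of Lemma \ref{lem:disc} holds for any integer $q$, with no positivity needed. So the contrapositive in fact excludes all integer $q$, matching the wording. Beyond that, the argument is routine and has no real obstacle: it is the compatibility of squaring with reduction mod $r$.
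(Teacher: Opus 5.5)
Your proposal is correct and follows the paper's proof: write $\Delta = m^2$, reduce $m$ modulo $r$ to get $\Delta \equiv (m \bmod r)^2 \pmod r$, and obtain the kill rule by contraposition combined with Lemma \ref{lem:disc}. Your extra remark, that the completing-the-square identity holds for every integer $q$ and so the rule excludes all integer $q$ rather than only positive ones, is a correct small tightening that the paper leaves implicit.
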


\begin{proof} Let $\Delta = m^2$, $m \in \mathbb{Z}$. Take $y = m \bmod r \in \{0, 1, \dots, r-1\}$; then $\Delta = m^2 \equiv y^2 \pmod r$, so $\Delta \in \QR(r)$.
\end{proof}

The proof is a single line, but combined with Lemma \ref{lem:disc} it forms a sieve that can be stacked indefinitely, on the following principle. Since $\Delta(p)$ is an integer-coefficient polynomial in $p$, the value $\Delta(p) \bmod r$ depends only on $p \bmod r$. So for each modulus $r$ it suffices to compute $\Delta(p) \bmod r$ once for each of the $r$ residue classes $p \equiv 0, 1, \dots, r-1$, deciding in one pass which residue classes give a value in $\QR(r)$ (allowed classes) and which do not (every $p$ in that class is excluded unconditionally).

\emph{A concrete example.} Take the hand-constructed example leaf $N = 996634794$, $A = 1967190451$ (with the true solution $(p, q) = (101, 311)$ planted; all figures can be checked by hand), with modulus $r = 7$. Here $\QR(7) = \{0, 1, 2, 4\}$. Computing $\Delta(p) \bmod 7$ for the 7 residue classes of $p$:

\begin{center}
\begin{tabular}{c|ccccccc} $p \bmod 7$ & 0 & 1 & 2 & 3 & 4 & 5 & 6 \\ \hline $\Delta(p) \bmod 7$ & 3 & 5 & 3 & \textbf{2} & 5 & 6 & \textbf{4} \\
$\in \QR(7)$? & no & no & no & \textbf{yes} & no & no & \textbf{yes} \\
\end{tabular}
\end{center}

Only the two classes $p \equiv 3, 6 \pmod 7$ can give a perfect square; the $p$ in the other 5 classes are all excluded. A single mod-7 table removes $5/7$ of the candidates, an allowed density of $2/7 \approx 0.29$. In particular the candidate $p = 103$ ($103 \equiv 5$) is decided on the spot: it lies in a disallowed class.

\emph{Stacking several moduli.} The allowed classes of different moduli impose constraints independently, and the densities multiply. For example, stacking $r = 9$ ($\QR(9) = \{0,1,4,7\}$, allowed classes $\{0,2,3,5,6,8\}$, density $6/9$), the density of $p$ satisfying both tables drops to $\tfrac{2}{7} \times \tfrac{6}{9} = \tfrac{12}{63} \approx 0.19$. Here $p = 97$ ($97 \equiv 6 \pmod 7$ passes, but $\Delta(97) \equiv 6 \pmod 9$, and $\QR(9)$ does not contain 6) is removed at this layer, while the true solution $p = 101$ ($\equiv 3 \bmod 7$, $\equiv 2 \bmod 9$) is an allowed class in both tables and survives to the exact final check of Lemma \ref{lem:disc}: $\Delta(101)$ is computed exactly and verified to be a perfect square (its integer square root is exact), and substituting into formula \eqref{eq:qformula} gives $q = 311$; checking that 311 is prime and that the leaf equation \eqref{eq:leaf} holds exactly captures the planted solution completely. This reflects two properties of the sieve: a true solution always has $\Delta$ a perfect square, so it passes every residue table and is retained by the coarse sieve; and every survivor undergoes exact verification at the final check. For a general quartic polynomial, each modulus has an allowed density of about $\tfrac{1}{2}$ on average (some moduli produce no constraint, as with $r = 5$ for this leaf, whose allowed density is 1), and after stacking $j$ effective moduli the density contracts as roughly $2^{-j}$.

For a real deep leaf (the $\varepsilon = 2.49 \times 10^{-9}$ case of \S\ref{sec:leafeq}), measured directly: 27 moduli (odd primes $r < 130$, skipping those dividing $N$) give a total allowed density of $2.07 \times 10^{-8}$, meaning that among the roughly $3.9 \times 10^7$ prime candidates only $0.81$ is expected to pass all moduli, so exactly one requires the exact final check.

\emph{Whole-leaf elimination: a real instance.} The extreme case is whole-leaf elimination: if some modulus $r$ makes every residue class of $p$ a non-residue, then the whole leaf (that exponent branch) needs no candidate checks at all. The real deepest leaf furnishes an instance: $N = 3^4 \cdot 5^6 \cdot 17^8 \cdot 137^{10} \cdot 1381^{10} \approx 10^{68}$, with scan domain $8 \times 10^8$. Substituting all 9 residue classes of $p$:

\begin{center}
\begin{tabular}{c|ccccccccc} $p \bmod 9$ & 0 & 1 & 2 & 3 & 4 & 5 & 6 & 7 & 8 \\ \hline $\Delta(p) \bmod 9$ & 2 & 6 & 5 & 8 & 6 & 8 & 5 & 6 & 2 \\
\end{tabular}
\end{center}

The residues that occur, $\{2, 5, 6, 8\}$, have empty intersection with $\QR(9) = \{0, 1, 4, 7\}$. Whichever residue class the eight hundred million candidates fall into, none satisfies the square condition, and the whole leaf (the exponent $(1,1)$ branch) is eliminated without any scan. This conclusion has been verified in one pass by the Lean kernel (theorem \texttt{deep\_leaf\_dead}).

\subsection{Lemma 3: the multilinear resolver --- from unbounded iteration to solving equations}\label{sec:lem3}

Lemmas \ref{lem:disc} and \ref{lem:qr} handle the prime dimensions. A deep leaf also has the difficulty of the exponent dimension: as noted in \S\ref{sec:leafeq}, the exponent $e$ of a pinned prime $t$ may be unbounded, so an enumeration that ``takes $e = 1$, then $e = 2$, then $e = 3$, \dots'' can never be exhausted and offers no guarantee of termination. This obstacle is real in practice. An early version of one of this project's programs that did not yet use this lemma (below, the ``second-generation engine'', as opposed to the final engine; see \S\ref{sec:49}) got stuck here: for about $7{,}824$ exponent configurations it could neither find a solution nor prove that none exists, and could only mark them as deferred, some of them involving fractional arithmetic on the order of $q^{1000}$. The key observation that removes this obstacle is the following.

\emph{The key observation.} Write $V = t^{2e}$. By the geometric-series sum (common ratio $t$, so the denominator is $t - 1$, independent of the exponent),
\[ \sigma(t^{2e}) = 1 + t + \cdots + t^{2e} = \frac{t^{2e+1} - 1}{t - 1} = \frac{tV - 1}{t - 1}
\] (the last step uses $t^{2e+1} = t \cdot t^{2e} = tV$), which shows that $\sigma(t^{2e})$ is linear in $V$. So the leaf equation is multilinear in the quantities $V_i = t_i^{2e_i}$.

\begin{lemma}[multilinear resolver]\label{lem:resolver} Let $N, A$ be fixed positive integers and $t, t_1, t_2$ fixed primes.

\textup{(i)} \textbf{Univariate closed form.} $A\,\sigma(t^{2e}) = 2Nt^{2e} + 1$ holds if and only if
\[ V \cdot \bigl(At - 2N(t - 1)\bigr) = A + t - 1, \qquad V = t^{2e},
\] which is a linear equation in $V$: writing the coefficient $C = At - 2N(t-1)$, one has $V = (A + t - 1)/C$. There is therefore no need to try the exponent $e$ one value at a time: if $C \nmid (A + t - 1)$, the equation has no integer solution and all exponents of this prime are excluded at once. If it divides, then $V$ is uniquely determined, and one checks whether this $V$ is a power of $t^2$ (that is, of the form $t^{2e}$) to read off $e$. There is no iteration anywhere.

\textup{(ii)} \textbf{Bivariate M\"obius enumeration.} For $A\,\sigma(t_1^{2e_1})\,\sigma(t_2^{2e_2}) = 2NV_1V_2 + 1$, solving gives
\[ V_2(V_1) = \frac{At_1V_1 - A + D}{d_2V_1 - At_2}, \qquad D = (t_1 - 1)(t_2 - 1),\ d_2 = At_1t_2 - 2ND.
\] This is a fractional-linear function of $V_1$, that is, a function of the form $\frac{aV_1 + b}{cV_1 + d}$, commonly called a M\"obius transformation (after the mathematician A.~F.~M\"obius; here it denotes the fractional-linear transformation, unrelated to the M\"obius $\mu$ function of number theory). It is monotone to the right of its unique pole. As $V_1 = t_1^{2e_1}$ ranges over all powers to the right of the pole, $V_2$ is confined to the closed envelope between two explicit rational endpoints (the value at the first power and the limiting value $At_1/d_2$), that is, an explicit interval, within which the powers of $t_2^2$ are finite in number (usually 0 to 2), each falling back to case \textup{(i)}. This process is finite and complete.

\textup{(iii)} \textbf{Higher-arity truncation} (the fallback when three or more exponents are simultaneously unbounded; rare in practice). Here the closed forms of \textup{(i)(ii)} do not apply directly, and one uses a ``fix one, recurse on the rest'' strategy.
\end{lemma}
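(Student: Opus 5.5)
The proof of Lemma \ref{lem:resolver} is pure algebra on the geometric-series identity $\sigma(t^{2e}) = (tV-1)/(t-1)$ with $V = t^{2e}$. The plan is to clear denominators in each case and read off the claimed linear, fractional-linear, or recursive structure.

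For \textup{(i)}, I would substitute $\sigma(t^{2e}) = (tV-1)/(t-1)$ into $A\sigma(t^{2e}) = 2NV+1$ and multiply by $t-1>0$. This gives $A(tV-1) = (t-1)(2NV+1)$, i.e.\ $V(At - 2N(t-1)) = A + t - 1$, and each step is reversible, which yields the ``if and only if''. I then split on $C$.
\begin{itemize}
\item If $C = 0$, the equation would force $A+t-1=0$, which is impossible since $A \ge 1$ and $t \ge 2$. So there are no solutions.
\item If $C \ne 0$, then $V$ is the unique rational $(A+t-1)/C$. An integer solution exists only if $C \mid A+t-1$, and then one tests whether $V$ is an even power of $t$.
\end{itemize}
Termination is clear: at most one $V$, checked by repeated division by $t^2$.

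For \textup{(ii)}, I would substitute both linear forms, multiply by $D = (t_1-1)(t_2-1)$, and collect the terms in $V_2$. The equation becomes $V_2\,(d_2V_1 - At_2) = At_1V_1 - A + D$ with $d_2 = At_1t_2 - 2ND$. I should verify the constant terms carefully when expanding $A(t_1V_1-1)(t_2V_2-1) = D(2NV_1V_2+1)$, since the stated numerator must come out exactly.

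The remaining analytic points are the following.
\begin{itemize}
\item \textbf{Degenerate coefficient.} If $d_2 = 0$, then $V_2$ is linear in $V_1$ rather than fractional-linear, so that case needs separate treatment. Here $V_2 = -(At_1V_1 - A + D)/(At_2)$, which must be positive; for $V_1 \ge t_1^2$ it is negative, so there is no solution.
\item \textbf{Monotonicity.} For $d_2 \ne 0$, the map is a Möbius transformation with a single pole at $V_1 = At_2/d_2$, and it is monotone on each side because its derivative has constant sign (the sign of the determinant).
\item \textbf{Enclosing interval.} Only $V_1$ with $V_2>0$ matter. Among these, either finitely many $V_1$ lie left of the pole, and these are handled by case \textup{(i)} after fixing $V_1$, or $V_1$ ranges over powers right of the pole. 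On the right branch, $V_2$ moves monotonically from its value at the first admissible power of $t_1^2$ toward the horizontal asymptote $At_1/d_2$. So $V_2$ lies in the closed interval between these two rationals.
\item \textbf{Finiteness.} A bounded interval contains finitely many powers of $t_2^2$. For each such $V_2$, the equation becomes the univariate case \textup{(i)} in $V_1$, with $A$ and $N$ replaced by $A\sigma(t_2^{2e_2})$ and $NV_2$.
\end{itemize}

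The main obstacle is completeness on the right branch. One must make sure the asymptote value itself is handled: if $At_1/d_2$ is an exact power of $t_2^2$, it is included by taking the closed interval, and approaching it only in the limit causes no loss, since case \textup{(i)} is then solved exactly. One must also make sure $V_1$ values between $t_1^2$ and the pole are not forgotten.

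For \textup{(iii)}, the argument is by induction on the number $k$ of unbounded exponents. The cases $k=1,2$ are \textup{(i)} and \textup{(ii)}. For $k\ge 3$, I would fix $V_1$ and absorb it into $(N,A)$ to reduce to $k-1$. Finiteness needs a bound on $V_1$. I would take it from the corridor: either the abundancy overshoots $2+1/n$ once $V_1$ is large enough to make the remaining ratio below the needed value, or a Möbius envelope in $V_1$ holds with the others replaced by their extremal values $t_i/(t_i-1)$. I would state this truncation as the part requiring care; in practice a pinned finite range is supplied.
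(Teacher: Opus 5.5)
Your parts (i) and (ii) are correct and follow the paper's proof: substitute $\sigma(t^{2e})=(tV-1)/(t-1)$, clear denominators, use the constant sign of the M\"obius derivative, and count the powers of $t_2^2$ in a bounded interval. You are more careful than the paper on the degenerate cases $C=0$ and $d_2=0$. Your separate handling of the powers of $t_1^2$ left of the pole is harmless but vacuous. For $V_1\ge 1$ the numerator $At_1V_1-A+D$ is positive, so $V_2>0$ already forces $d_2>0$ and $V_1>At_2/d_2$. That is why the paper only looks at the right branch.

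The gap is in (iii). The reduction step (fix $V_1$ and absorb it via $A\mapsto A\,\sigma(t_1^{2e_1})$, $N\mapsto NV_1$) is fine. But you never supply the bound on $V_1$ that makes the recursion finite, and ``in practice a pinned finite range is supplied'' is not an argument.

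Your second suggested source of a bound cannot give one. Putting the other factors at their suprema $s_j=t_j/(t_j-1)$ bounds the abundancy from above, so it constrains $V_1$ only from below.

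Your first source is exactly the paper's justification: put the others at their minimum, and once $A_{\mathrm{lo}}>2N_{\mathrm{lo}}+1$, monotonicity kills this and every larger $e_1$. Write $m_j=\sigma(t_j^2)/t_j^2$ and $\lambda_i=s_i\prod_{j\ne i}m_j$. The cutoff exists only if $\frac{A}{N}\lambda_1>2$. Otherwise the minimal configuration's abundancy stays below $2$ for every $e_1$, and the upward enumeration never stops. With $\rho=\prod_j s_j$, each $\lambda_i<\rho$. So for $A/N$ in the nonempty window $(2/\rho,\,2/\max_i\lambda_i]$, this fails for every choice of the enumerated exponent, even though the corridor is reachable.

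The missing idea is to bound the \emph{smallest} $V_i$ rather than a preselected one. With $R=\frac{A}{N}\rho$ and $n=N\prod_iV_i$, the equation is equivalent to
\[ R\prod_{i=1}^{k}\Bigl(1-\frac{1}{t_iV_i}\Bigr)=2+\frac1n . \]
If $R\le 2$ there is no solution. If $R>2$, a solution with every $V_i\ge X$ would give $R\bigl(1-\frac{k}{2X}\bigr)\le 2+\frac{1}{NX^k}$, using $\prod(1-x_i)\ge 1-\sum x_i$. This is false once $X>\bigl(\frac{kR}{2}+\frac1N\bigr)/(R-2)$. So every solution has some $V_i$ at most this explicit bound. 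Branching over the $k$ indices and the finitely many such powers yields finitely many $(k-1)$-variable problems, and the induction closes on (ii) and (i).
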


\emph{The method for \textup{(iii)}.} Pick any unknown exponent $e$ (in practice the one whose increase raises the abundancy fastest, usually the smallest prime), and enumerate $e = 1, 2, 3, \dots$ value by value. Each time $e$ is fixed, there is one fewer unknown exponent, and the problem reduces to (ii) (then to (i)), all in finitely many steps.

\emph{Why only finitely many $e$ need be enumerated.} The key is again the fact from \S\ref{sec:corridor}: the abundancy $\sigma(n)/n$ increases monotonically with any one exponent. For a fixed $e$, take all the other free exponents to be their minimum value 1; this gives the candidate with the smallest abundancy under this $e$, call it $n_{\mathrm{lo}}$, and write $N_{\mathrm{lo}} = n_{\mathrm{lo}}$ (this candidate itself) and $A_{\mathrm{lo}} = \sigma(n_{\mathrm{lo}})$ (its divisor sum). The subscript lo denotes ``the minimal configuration under this $e$''. If
\[ A_{\mathrm{lo}} > 2 N_{\mathrm{lo}} + 1,
\] that is, if even this minimal candidate already satisfies $\sigma(n) > 2n + 1$ (equivalently, abundancy $\sigma(n)/n > 2 + 1/n$, above the top of the corridor), then raising any other exponent only raises the abundancy further and overshoots more, and raising $e$ only increases it. So this $e$, all larger $e$, and every choice of the remaining exponents below them are excluded at once. Hence there is a truncation value $e_0$, and $e$ need only be enumerated up to $e_0$; the infinitely many cases are covered in finitely many steps.

\begin{proof} (i) and (ii) are direct algebra: substituting $\sigma = (tV - 1)/(t - 1)$ and rearranging gives the result. The monotonicity in (ii) follows from the numerator of the derivative of the M\"obius transformation being constant (of one sign). The finiteness of the powers of $t_2^2$ in the explicit interval is clear. Part (iii) follows from the strict monotonicity of the abundancy in each exponent.
\end{proof}

\subsection{Three applications of the resolver}\label{sec:lem3apps}

The following three small instances (constructed by script and verified exactly) exhibit each mode of the resolver; instances (a) and (b) are formally verified in Lean (\S\ref{sec:trust}).

\textbf{(a) Univariate closed form.} Take $t = 3$, $N = 59$, $A = 79$ (with the solution $e = 2$ planted, that is $V = 81$, $\sigma(3^4) = 121$; check $79 \times 121 = 9559 = 2 \times 59 \times 81 + 1$). Without Lemma \ref{lem:resolver} one would have to enumerate $e = 1, 2, 3, \dots$ without bound. Applying (i):
\[ C = At - 2N(t - 1) = 79 \times 3 - 118 \times 2 = 1, \qquad A + t - 1 = 81,
\] so $V = 81/1 = 81$, uniquely determined. Checking that $81 = 3^4$ is a power of $3^2$ gives $e = 2$. If $C \nmid A + t - 1$ or the quotient is not a power of $t^2$, then all infinitely many exponents are excluded at once, with no iteration.

\textbf{(b) Bivariate M\"obius enumeration.} Take $t_1 = 3$, $t_2 = 5$, $N = 60$, $A = 67$ (with the solution $e_1 = e_2 = 1$ planted; check $67 \times 13 \times 31 = 27001 = 2 \times 60 \times 225 + 1$). Both exponents are unbounded, and the enumeration space is a two-dimensional infinite lattice. Applying (ii): $D = 2 \times 4 = 8$, $d_2 = 67 \times 15 - 120 \times 8 = 45$,
\[ V_2(V_1) = \frac{201\,V_1 - 59}{45\,V_1 - 335}, \qquad \text{pole at } V_1 = \tfrac{67}{9} \approx 7.44.
\] To the right of the pole, $V_1$ takes powers of $3^2$: $V_2(9) = 25$, $V_2(81) = \tfrac{8111}{1655} \approx 4.90$, with limit $V_2(\infty) = \tfrac{201}{45} = \tfrac{67}{15} \approx 4.47$, the function being monotone decreasing. So the $V_2$ corresponding to all infinitely many $V_1$ lie in the interval $[\tfrac{67}{15},\ 25]$, within which the only power of $5^2$ is 25. Reducing to the univariate case and checking $V_1 = 9$, $V_2 = 25$ recovers exactly the planted solution. The two-dimensional infinite lattice is closed completely by two function evaluations and one interval search for powers.

\textbf{(c) Trivariate overshoot truncation.} Take three free primes $3, 5, 7$ with exponents $e_1, e_2, e_3$ all unbounded (a three-dimensional infinite lattice), and consider the equation
\[ \sigma(3^{2e_1})\sigma(5^{2e_2})\sigma(7^{2e_3}) = 2\cdot 3^{2e_1}5^{2e_2}7^{2e_3} + 1
\] (here the fixed part is empty, $A = N_{\text{fixed}} = 1$). By (iii), take the minimal configuration $e_1 = e_2 = e_3 = 1$:
\[ N_{\mathrm{lo}} = 3^2 5^2 7^2 = 11025, \qquad A_{\mathrm{lo}} = \sigma(9)\sigma(25)\sigma(49) = 13 \times 31 \times 57 = 22971.
\] Comparing $A_{\mathrm{lo}} = 22971$ with $2N_{\mathrm{lo}} + 1 = 22051$: $22971 > 22051$ (by 920), so the abundancy of the minimal configuration, $22971/11025 \approx 2.084$, already exceeds the top of the corridor $2 + 1/N_{\mathrm{lo}} \approx 2.00009$. Since increasing any exponent only raises the abundancy further (rising toward the ceiling product $\tfrac32\cdot\tfrac54\cdot\tfrac76 = \tfrac{35}{16} \approx 2.19$, always $> 2$), the entire three-dimensional infinite lattice satisfies none of the quasiperfect equation, all excluded by a single inequality, with truncation value $e_0 = 1$ (no enumeration needed). Had one instead used a combination of primes whose abundancy supremum falls short of 2, the minimal configuration would not overshoot, and one would enumerate the coarse variable upward as in (iii), recursing to (ii) at each value.

The division of labour among the three lemmas can be summarized in a table:

\begin{center}
\begin{tabular}{lll}
\toprule Lemma & Dimension eliminated & Change in nature of the work \\
\midrule Lemma \ref{lem:disc} (discriminant) & last prime $q$ & search $\to$ verification \\
Lemma \ref{lem:qr} (QR kill) & intermediate prime $p$ & verification $\to$ table lookup \\
Lemma \ref{lem:resolver} (resolver) & exponent lattice $e_1, e_2, \dots$ & unbounded iteration $\to$ finite solving \\
\bottomrule
\end{tabular}
\end{center}

\subsection{Two further kill lemmas}\label{sec:newkills}

The deep-leaf and xdeep stages of the full campaign (\S\ref{sec:campaign}) introduced two kill lemmas beyond the three above. Both have paper proofs, machine cross-checks, and completed Lean formalizations (\S\ref{sec:trust}).

\begin{lemma}[FULLKILL\_9, all-exponent congruence kill]\label{lem:fullkill} Fix the prime combination of a leaf. The pair $(N, A) \bmod 9$ is purely periodic in the exponent of each pinned prime (with the period determined rigorously by orbit detection), so the infinitely many exponent configurations produce only finitely many residue tables mod 9. If for every one of these finitely many tables, and for every residue class of $p$, the discriminant $\Delta(p) \bmod 9$ is a non-residue, then the leaf has no solution over all exponent branches and all $p$, and the whole leaf is killed with zero scanning.
\end{lemma}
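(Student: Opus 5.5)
The argument has three parts. First, the leaf data $(N,A)$ reduce mod 9 to a function of the exponents that is eventually periodic in each pinned exponent. Second, the discriminant $\Delta(p) \bmod 9$ depends only on $(N \bmod 9, A \bmod 9, p \bmod 9)$. Third, Lemma \ref{lem:qr} with $r=9$ kills every configuration at once.

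\emph{Periodicity.} For a pinned prime $t$ with exponent $2e$, the pair $(t^{2e} \bmod 9,\ \sigma(t^{2e}) \bmod 9)$ is produced by iterating the map
\[ (V,\Sigma)\mapsto (t^2V,\ \Sigma + tV + t^2V) \pmod 9, \]
since $\sigma(t^{2e+2})=\sigma(t^{2e})+t^{2e+1}+t^{2e+2}$. This is a deterministic map on the finite set $(\mathbb{Z}/9)^2$, so the orbit is eventually periodic. Orbit detection finds the preperiod and period exactly, by recording the first repeated state. (For $t\ne 3$ the map is a bijection, hence the orbit is purely periodic. For $t=3$, $V\equiv 0$ from $e\ge1$ on, so $\Sigma$ is constant thereafter.)

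Because $N$ and $A=\sigma(N)$ are multiplicative over the pinned primes, $(N\bmod 9, A\bmod 9)$ is obtained from the per-prime pairs by componentwise multiplication. Ranging over all exponent tuples therefore yields only the finite set of products of orbit points. Each window exponent or tail exponent ranges over a cofinite or explicit set of $e$, and it contributes exactly the orbit points hit by those $e$; to be safe, one can take the whole orbit from the smallest allowed $e$ onward.

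\emph{Discriminant reduction.} With $S=A(p^2+p+1)$ and $T=2Np^2$, the quantity $\Delta(p)=S^2+4(T-S)(S-1)$ is an integer polynomial in $N,A,p$. Hence $\Delta(p)\bmod 9$ is determined by the residues of $N$, $A$ and $p$ mod 9. For each of the finitely many $(N,A)$ tables and each of the 9 classes of $p$, the hypothesis says $\Delta(p)\bmod 9\notin\QR(9)=\{0,1,4,7\}$. Lemma \ref{lem:qr} then shows $\Delta(p)$ is not a square for any prime $p$ and any exponent configuration. By Lemma \ref{lem:disc}, the leaf equation has no integer $q$ for any of them, so the whole leaf dies with no scanning.

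\emph{Main obstacle: the free primes' exponents.} Lemma \ref{lem:disc} is stated only for $b=c=1$, yet the lemma claims ``all exponent branches''. For $b>1$ the factor $\sigma(p^{2b})$ replaces $p^2+p+1$. I would handle this by absorbing $p^{2b}$ into $(N,A)$ exactly as a pinned prime, noting that $p^{2b},\sigma(p^{2b}) \bmod 9$ depend only on $p\bmod 9$ and $b$, again with a periodic orbit. For $c>1$, the quadratic in $q$ becomes a higher-degree equation, and no discriminant exists. My reading is that the statement means all branches of the pinned and window exponents with the free exponents at the dominant $(1,1)$ shape, and that the higher $q$-exponent branches are discharged by Lemma \ref{lem:resolver}. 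I would make this scope explicit in the proof. The remaining steps are the rigorous orbit certification and the finite table check, which is a kernel-\texttt{decide} computation.
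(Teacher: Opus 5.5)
Your treatment of the pinned exponents (the state map $(V,\Sigma)\mapsto(t^2V,\Sigma+tV+t^2V)$ on $(\mathbb{Z}/9)^2$, multiplicativity, then Lemma~\ref{lem:qr} at $r=9$) is the paper's state-transition argument. The gap is the scope you settle on at the end. The statement's mention of $\Delta(p)$ invites your reading, but the paper's soundness remark ("each concrete exponent configuration separately'', "no slicing'') makes clear the lemma covers $c\ge 2$ too. If the lemma kills only the $c=1$ slice, with $c\ge2$ left to Lemma~\ref{lem:resolver}, the stated conclusion does not follow. The resolver handles $c\ge2$ only per window candidate $q$, that is, after scanning $p$, so the leaf is not killed with zero scanning. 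Worse, skipping a leaf because its $c=1$ slice is congruence-dead is exactly the scheme the paper rejects as unsound after Lemma~\ref{lem:tightkill}. The loss is real. For the deepest leaf of \S\ref{sec:lem2} one has $N\equiv 0$ and $A\equiv 8\pmod 9$, and its $(1,1)$ slice dies mod~$9$. Yet $b=1$, $c=2$, $p\equiv 8$, $q\equiv 4\pmod 9$ gives $A\,\sigma(p^2)\,\sigma(q^4)\equiv 8\cdot 1\cdot 8\equiv 1\equiv 2Np^2q^4+1\pmod 9$, so that branch survives mod~$9$.

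The missing step is the one you already found for $b$, applied to $q$ as well, with no discriminant at all. For a prime $r\ne 3$ and $k\ge 1$, the pair $(r^{2k}\bmod 9,\ \sigma(r^{2k})\bmod 9)$ is generated by your same state map from $r\bmod 9$. Over all unit residues and all $k\ge1$ it takes exactly $23$ values. So instead of asking whether $\Delta(p)$ is a square, check the leaf equation itself,
$A\,\Sigma_p\Sigma_q\not\equiv 2N\,V_pV_q+1\pmod 9$.
Do this for every $(N,A)$ residue table and every pair of free-prime states $(V_p,\Sigma_p)$, $(V_q,\Sigma_q)$. That is what the paper's count of order $81\times 23^2$ reflects: $81$ pairs $(N,A)$ and $23$ states per free prime. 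It yields an obstruction for each concrete configuration (pinned exponents, $b$, $c$) separately. For $c=1$, the discriminant test is only a one-directional shortcut for this check, since $\Delta(p)\notin\QR(9)$ implies no $q\bmod 9$ works. It can serve for the dominant branch, but it cannot stand in for the whole exponent family.
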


Soundness in brief: this is the periodic-class extension of the whole-leaf elimination of Lemma \ref{lem:qr} (the mod-9 instance of \S\ref{sec:lem2}) along the exponent dimension. It furnishes an unconditional congruence obstruction for each concrete exponent configuration separately, so what it kills is the full family of equations themselves, with no approximation and no slicing. The finite check is an enumeration on the order of $81 \times 23^2$, well suited to a single Lean \texttt{decide}. In practice it killed 2,646 of the 56,154 xdeep leaves (about 4.7\%) at zero cost.

\begin{lemma}[tight-kill, minimal-configuration monotone-overshoot kill] \label{lem:tightkill} For a window candidate integer $q$ under some branch, let $N_{\mathrm{lo}}$ be the minimal configuration with every exponent of that branch at its lower bound, and $A_{\mathrm{lo}} = \sigma(N_{\mathrm{lo}})$. If
\[ A_{\mathrm{lo}}\,(q^2 + q + 1) \ \ge\ 2\,N_{\mathrm{lo}}\,q^2 + 2,
\] then every exponent-raised configuration $n'$ above this $q$ (all values of the exponent $c \ge 1$ of $q$, and arbitrary raising of each pinned exponent) satisfies $\sigma(n') \ge 2n' + 2 > 2n' + 1$, so the entire exponent cone has no solution and the candidate is excluded without primality testing or the resolver.
\end{lemma}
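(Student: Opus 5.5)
The plan is to turn the single hypothesis into a statement about the \emph{excess} $E(m) := \sigma(m) - 2m$, and then to show that $E$ can only grow when exponents are raised. Write $m_0 = N_{\mathrm{lo}}\,q^2$. Only prime candidates $q$ coprime to the fixed primes can occur in an actual solution, and any composite $q$ is discarded for that reason alone, so it suffices to treat prime $q$. For such $q$, multiplicativity gives $\sigma(m_0) = A_{\mathrm{lo}}(q^2+q+1)$. The hypothesis $A_{\mathrm{lo}}(q^2+q+1) \ge 2N_{\mathrm{lo}}q^2 + 2$ is therefore exactly $E(m_0) \ge 2$. The target conclusion $\sigma(n') \ge 2n'+2$ is $E(n') \ge 2$. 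So the statement reduces to propagating the inequality $E \ge 2$ from the minimal configuration to every configuration of the cone.

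The key step is a one-step monotonicity identity. Let $n = t^{2a}k$ with $t$ prime and $t \nmid k$. Raising the exponent of $t$ by one step gives $n^+ = t^{2a+2}k$. From $\sigma(t^{2a+2}) = t^2\sigma(t^{2a}) + t + 1$ we get
\[
E(n^+) = t^2\sigma(t^{2a})\sigma(k) + (t+1)\sigma(k) - 2t^{2a+2}k = t^2 E(n) + (t+1)\sigma(k).
\]
Hence $E(n) \ge 2$ implies $E(n^+) \ge 2t^2 + (t+1) > 2$. Two features matter here. First, the multiplier $t^2$ is positive, so the sign of a positive excess is preserved. Second, the added term $(t+1)\sigma(k)$ is positive. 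This is the exact, integer version of the monotone-abundancy fact of \S\ref{sec:corridor}. It is stronger than the bare statement that abundancy increases, because abundancy exceeding $2+1/n$ alone would not survive the change in $n$; the identity controls $E$ directly.

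Next I apply this by induction on the total number of unit exponent raises $\sum(\text{exponent} - \text{lower bound})$ separating $n'$ from $m_0$. The exponent $c$ of $q$ and each pinned exponent are raised one step at a time. At each step the prime being raised is coprime to the rest of the number, as the identity requires. The base case is $E(m_0) \ge 2$, and the inductive step is the identity above, so every $n'$ in the cone satisfies $E(n') \ge 2$. This gives $\sigma(n') \ge 2n'+2 > 2n'+1$, so no point of the cone is quasiperfect.

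I expect no real obstacle, since the argument is finite algebra. The only point needing care is bookkeeping: the ``minimal configuration'' must be at the branch's lower bound in \emph{every} coordinate, including $q$ at exponent $1$. Otherwise the base case would not dominate the whole cone. Coprimality of $q$ with $N_{\mathrm{lo}}$ must also be checked; it holds because $q$ exceeds every fixed prime.
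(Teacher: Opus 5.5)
Your proof is correct. It rests on the same principle as the paper's: monotonicity in every exponent, with the minimal configuration (including $c=1$) as the base point. The difference is that you package it additively rather than multiplicatively.

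The paper argues with the ratio. Each factor $\sigma(t^{2a})/t^{2a}$ increases with $a$, and $\sigma(q^{2c})/q^{2c}$ is smallest at $c=1$. Multiplying factorwise gives $\sigma(n')/n' \ge \sigma(m_0)/m_0$ in one step, with no induction over paths.

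Your recurrence $E(n^+) = t^2E(n) + (t+1)\sigma(k)$ is exactly that fact multiplied through by $n^+$. Dividing by $n^+ = t^2 n$ gives $\sigma(n^+)/n^+ = \sigma(n)/n + (t+1)\sigma(k)/n^+$. So your remark that the identity is \emph{stronger} than abundancy monotonicity is mistaken. You worried that a bound of the form $2 + c/n$ would not survive the change of $n$, but the threshold moves in the favourable direction: $n' \ge m_0$ gives $\sigma(n')/n' \ge \sigma(m_0)/m_0 \ge 2 + 2/m_0 \ge 2 + 2/n'$. This is the implicit last step of the paper's argument, and the two formulations are equivalent.

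What your version does buy:
\begin{itemize}
\item It is a division-free integer statement, well suited to exact arithmetic and kernel checking.
\item It gives a quantitative by-product. Each raise multiplies $E$ by $t^2$ and adds at least $t+1 \ge 4$, so already $E(m_0) \ge 0$ forces $E \ge 4$ at every strictly raised configuration. The margin $+2$ is therefore load-bearing only at $m_0$ itself, which is consistent with the paper's point that $+1$ would wrongly kill the equality case there.
\end{itemize}

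Your reduction to prime $q$ is the right reading of ``without primality testing'': a composite candidate cannot be the free prime, so the kill is valid in both cases.
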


Three points of soundness. First, $\sigma(t^{2a})/t^{2a}$ is strictly increasing in $a$ (\S\ref{sec:corridor}), and $\sigma(q^{2c})/q^{2c}$ attains its infimum over all $c \ge 1$ at $c = 1$, so overshoot at the minimal point implies overshoot over the whole cone. Second, the margin $+2$ on the right-hand side is load-bearing: with $+1$, the equality case $\sigma(n) = 2n + 1$ would itself be a quasiperfect solution and would be killed by mistake. The code review therefore verified the margin and the direction of the inequality character by character. Third, the lemma is of the same family as the minimal-configuration overshoot check already in the production engine (the family of Lemma \ref{lem:resolver}(iii)), a third application of the same monotonicity principle.

An important negative counterpart: the deployment review had considered a superficially similar scheme, namely skipping a whole leaf whenever the discriminant QR kill (a congruence obstruction valid for the exponent $(1,1)$ slice) fires. That scheme was shown to be \textbf{unsound}: the unsolvability of the slice equation mod $r$ says nothing about the equations with $c \ge 2$ (changing the exponent changes the equation, and solvability can flip arbitrarily), so a whole-leaf skip would bury a missed-solution gap in the proof. It was discarded. The essential difference of tight-kill lies in the logical shape of the kill: the former is a slice congruence, the latter an inequality legitimately quantified over the whole exponent cone by monotonicity. The full adjudication is in the acceleration synthesis report (reference \cite{internal}).

\section{The computation}\label{sec:comp}

The three-lemma system was proved in on a pilot --- the 49 deepest leaves of the stem $(3, 5, 17, 137)$, the computational bottleneck that had obstructed the whole enterprise --- and then extended to all 381 stems of the wing $3 \mid n$. The full computation was completed between 8 and 10 July 2026, with the complete raw ledger, hourly progress tables, and operating procedure archived for audit (see reference \cite{internal}).

\subsection{The per-leaf decision procedure}\label{sec:algorithm}

\textbf{Algorithm (complete elimination of a single leaf).} Input: the leaf's fixed part $(N, A)$ and the scan interval $[p_{\min}, \PSUP]$ for the intermediate prime ($\PSUP \approx 4/\varepsilon$). Output: all quasiperfect solutions of the leaf (expected to be empty), or ``no solution'' together with a proof.

\emph{Outer loop: enumeration of exponent branches (Lemma \ref{lem:resolver}).} The exponents of the leaf's pinned prime and last prime are unbounded; first use Lemma \ref{lem:resolver} to turn these unbounded dimensions into a finite set of concrete $(N, A)$ configurations (a single unbounded exponent by the univariate direct solution, two by M\"obius interval enumeration, three or more by overshoot truncation; instances in \S\ref{sec:lem3apps}). For each configuration, carry out the four steps below.

\emph{Step 1: whole-leaf elimination check (Lemma \ref{lem:qr}, cheapest, done first).} First try to eliminate the whole leaf at once: if there is a modulus $r$ such that every residue class of $p$ gives $\Delta(p) \notin \QR(r)$, then the whole leaf has no solution and the procedure ends (the mod-9 instance of \S\ref{sec:lem2} is a real case).

\emph{Step 2: build tables (Lemma \ref{lem:qr}).} Choose a set of small moduli, and for each $r$ precompute the ``allowed classes'', those $p \bmod r$ for which $\Delta(p) \in \QR(r)$ holds.

\emph{Step 3: coarse sieve (Lemma \ref{lem:qr}).} Check each $p$ in the scan interval against every table; failing any one table excludes it, with the modulus recorded as evidence. Throughout, only small-number modular arithmetic is done, with no large numbers touched. For a real deep leaf, after stacking 27 moduli, only about $0.81$ candidate is expected to reach the final check out of the $8 \times 10^8$ candidates.

\emph{Step 4: final check (Lemma \ref{lem:disc}).} For each survivor: compute $\Delta(p)$ exactly and test whether it is a perfect square; if so, formula \eqref{eq:qformula} gives $q$ directly, and primality of $q$ together with the leaf equation \eqref{eq:leaf} is verified exactly. A true solution always survives to this step and is captured here (the worked instance is in \S\ref{sec:lem2}).

Outside the interval, monotonicity of the abundancy closes things directly: both $p \le p_{\min}$ (supremum short of 2) and $p > \PSUP$ ($q < p$, beyond the leaf) require no scan. The place of the three lemmas in the algorithm is thus clear: Lemma \ref{lem:resolver} is the outer loop, Lemma \ref{lem:qr} is the coarse sieve (including whole-leaf elimination), and Lemma \ref{lem:disc} is the final check.

\subsection{The pilot: the 49 deepest leaves}\label{sec:49}

Proving $\omega \ge 8$ requires excluding all cases of ``$3 \mid n$ and $\omega = 7$'' (\S\ref{sec:threemid}). By the corridor enumeration of \S\ref{sec:corridor}, this case expands into a vast search tree, the overwhelming majority of whose leaves are closed by cheap means: either the abundancy lies outside the corridor, or the intermediate prime falls within the range of the $3.4 \times 10^7$ prime table and is scanned directly. Within the stem $(3, 5, 17, 137)$, 49 leaves cannot be scanned: their abundancy gap $\varepsilon \approx 2.5 \times 10^{-9}$ is extremely small, so their scan bound $2/\varepsilon \approx 8 \times 10^8$ far exceeds the prime table's $3.4 \times 10^7$ (\S\ref{sec:leafeq}). These 49 are the deep leaves on which the lemma system was first deployed at scale, and the last remaining part of that stem's elimination.

Their common structure: the first four primes are fixed as $3, 5, 17, 137$ (the stem), the fifth prime is fixed as $1381$, after which only two free primes $p < q$ remain. The only difference among the 49 lies in the exponents of the primes $17$ and $137$: each prime's exponent takes 7 values that ``survived the earlier stages of filtering'' (one of which is an unbounded tail representing ``exponent $\ge 41$'', denoted $w$), and the $7 \times 7 = 49$ combinations are these 49 leaves. Each also carries the three unbounded exponent dimensions of \S\ref{sec:leafeq} (the exponent of $1381$, the window exponents of $17/137$, and the exponent of $q$), so each is in fact a family of equations rather than a single equation.

Listing these 49 directly gives the 49 cells of the table below: the row label is the exponent $a_{17}$ of the prime $17$, and the column label is the exponent $a_{137}$ of the prime $137$ (both are reduced exponents, the actual power being twice this, with $w$ denoting the unbounded branch ``exponent $\ge 41$''); the entries are the leaf numbers $0$--$48$.

\begin{center}\footnotesize
\begin{tabular}{c|ccccccc} $a_{17}\backslash a_{137}$ & 5 & 9 & 20 & 21 & 29 & 33 & $w$ \\ \hline 4  & 0  & 1  & 2  & 3  & 4  & 5  & 6  \\
5  & 7  & 8  & 9  & 10 & 11 & 12 & 13 \\
16 & 14 & 15 & 16 & 17 & 18 & 19 & 20 \\
20 & 21 & 22 & 23 & 24 & 25 & 26 & 27 \\
24 & 28 & 29 & 30 & 31 & 32 & 33 & 34 \\
33 & 35 & 36 & 37 & 38 & 39 & 40 & 41 \\
$w$ & 42 & 43 & 44 & 45 & 46 & 47 & 48 \\
\end{tabular}
\end{center}

For example, the deepest leaf eliminated outright by the mod-9 instance of \S\ref{sec:lem2}, $N = 3^4 5^6 17^8 137^{10} 1381^{10}$, corresponds to $a_{17} = 4$, $a_{137} = 5$, that is, number 0.

\emph{The three-layer lock.} These 49 deep leaves are locked by a three-layer structure:

\begin{itemize}
\item \textbf{L1 (QR descent sieve, Lemmas \ref{lem:disc}+\ref{lem:qr})}: the pinned-minimum-exponent cross section of all 49/49 leaves is eliminated across every class by mod 9 (the Lean theorem of \S\ref{sec:lem2}, extended to all 49 leaves and confirmed by measurement). After periodic-class decomposition and second-level refinement, the dominant branch ($b = c = 1$) of 8/49 leaves is fully closed with zero scans. The unboundedness of the pinned exponent is handled by periodic reduction: $(N, A) \bmod r$ is purely periodic in the exponent $a$ (the period determined rigorously by orbit detection, for example 1381 has period 9 mod 9), so checking one period proves all $a \ge s$.
\item \textbf{L2 (exact walk-through of the authority layer, including Lemma \ref{lem:resolver})}: set the scan lower bound $\mathrm{PD}_1 = 803{,}059{,}978 \approx 2/\varepsilon$ and upper bound $\PSUP = 1{,}606{,}119{,}955 \approx 4/\varepsilon$ (the scan domain of \S\ref{sec:leafeq}). Within the interval $(1381, \mathrm{PD}_1]$, $p$ is too small and the abundancy supremum falls short of 2, so by monotonicity no scan is needed to prove there is no solution. The same holds for $p > \PSUP$ (there $q < p$, beyond the leaf). The intervening survivor domain of about $3.87 \times 10^7$ primes is checked exactly one by one, and the roughly $6.19 \times 10^6$ window candidates $q$ per leaf are all closed finitely by the Lemma \ref{lem:resolver} resolver ($c \ge 2$ by univariate direct solution, pinned exponents by M\"obius interval enumeration, three or more unbounded exponents by overshoot truncation). The program includes a safety guard that raises an alarm should the resolver ever fail to decide a case; it was never triggered in practice, so every branch was genuinely resolved.
\item \textbf{L3 (planted-solution test)}: all 12 sets of planted true solutions and perturbed false solutions were decided correctly (true solutions all captured, false solutions all rejected), two of which specifically verify that ``the QR sieve kills a true solution with $c \ge 2$, but the resolver finds it correctly'', which is the design rationale for L2 retaining the full scan as an authority layer.
\end{itemize}

\textbf{Overall result: 49/49 leaves pass the machine check for coverage continuity, with 0 solutions and 0 unresolved cases, in a total of 26.6 core-h.}

\subsection{Cost analysis of the pilot}\label{sec:cost}

\textbf{Computing environment.} The core-h figures below refer to user+sys CPU time (single-core equivalent) as measured by \texttt{/usr/bin/time}, and $\mu$s/candidate are single-core measurements; the implementation is Python 3.10 with sympy 1.14. Both the original elimination of the 49 deep leaves (26.6 core-h, wall clock about 2.5 h) and the independent recomputation (9.88 core-h, 14-way parallelism) were run on the same local 16 vCPU machine (QEMU virtual CPU, nominally about 1.7 GHz); the elimination ran while other work shared the machine and the recomputation on an idle one, so the difference between the two figures is a load effect rather than a hardware one. The microbenchmarks (1.1 $\mu$s / 0.26 $\mu$s / 0.64 ms) were measured on a single core of that machine.

Figures marked (measured) come from real run records; those marked (estimated) are order-of-magnitude inferences taken under optimistic assumptions. The lemma-by-lemma comparison follows.

\emph{Lemma \ref{lem:disc}:} without it, for each intermediate prime $p$ one must search the possible range of $q$ prime by prime for a match and verify the leaf equation. At a deep leaf the range of $q$ is of the same order as $p$ (order $10^8$, about $4 \times 10^7$ primes), so the per-leaf cost is about
\[ 3.9 \times 10^7 \ (\text{values of } p) \times 4 \times 10^7 \ (\text{values of } q) \approx 1.6 \times 10^{15} \text{ large-number verifications},
\] which at 1 $\mu$s each (estimated, optimistically: each verification involves a large-number multiplication on the order of $10^{68}$) is about $4.4 \times 10^5$ core-h/leaf, roughly $2 \times 10^7$ core-h over 49 leaves, that is more than two thousand years of single-core time and over a decade even on a 224-core server, infeasible in scale. With it, each $p$ needs only one square test and $q$ is given directly by the formula: measured on the real leaf 0, ``computing $\Delta(p)$ exactly (a 111-digit number) plus an integer square root'' is 1.1 $\mu$s/candidate, about 43 s for the full domain of one leaf, a factor of about $10^7$ difference.

\emph{Lemma \ref{lem:qr}:} without it (using only Lemma \ref{lem:disc}), every candidate must undergo the 1.1 $\mu$s exact large-number computation, with no whole-class elimination possible. About 43 s per leaf per branch seems bearable, but this cost is multiplicative over the exponent branches (each leaf contains a $7 \times 7$ grid and several $b, c$ branches), and is infeasible for the $10^9$-scale number of leaves in the $\omega \ge 9$ direction. With it, measured on the same machine and same leaf: precomputing 27 allowed tables costs 0.64 ms/leaf (one-off), after which each candidate costs 0.26 $\mu$s (pure table lookup, $1/4.2$ of the exact computation). The density also contracts to $2.07 \times 10^{-8}$, an expected $0.81$ survivor per leaf, so the final-check workload after the coarse sieve is minimal. The greatest effect is whole-class and whole-leaf elimination: where mod 9 elimination applies (the minimal cross section of all 49/49 in this elimination, and the full domain of the dominant branch of 8/49), the cost drops from ``per candidate'' to zero, and the $8 \times 10^8$ scan domain is decided by a single 9-entry table.

\emph{Lemma \ref{lem:resolver}:} without it, the issue is not speed but non-termination. A branch with an unbounded exponent can only be enumerated exponent by exponent, and the second-generation engine (the early program version defined in \S\ref{sec:lem3}) produced $7{,}824$ deferred records in the 49-leaf pre-scan: the program kept trying larger exponents, stalling on fractional arithmetic of order $q^{1000}$, or simply giving up on the decision for branches with several simultaneously unbounded exponents. Here the per-leaf accounting cost reached 70--83 core-h yet still could not be completed, and each deferred record was a gap in the proof that more machine time could not fill. With it, all $7{,}824$ are decided on the spot, and the unbounded exponent lattice becomes a finite equation solution. This is not a matter of speedup but of turning the undecidable into the decidable.

The cost of the same batch of 49 leaves under four configurations:

\begin{center}\footnotesize
\begin{tabular}{p{4.6cm}p{3.4cm}p{3.2cm}p{2.6cm}}
\toprule Configuration & Per-leaf cost & Total over 49 leaves & Status \\
\midrule Double scan (none of the three lemmas) & $\sim 4 \times 10^5$ core-h (estimated) & $\sim 2 \times 10^7$ core-h & infeasible in scale \\
Lemmas \ref{lem:disc}+\ref{lem:qr} plus the interval lemma, no Lemma \ref{lem:resolver} (second-generation engine) & 70--83 core-h with deferrals that cannot be closed (measured) & $7{,}824$ unresolved & incomplete \\
All three lemmas (final program, measured) & 0.54 core-h (elimination accounting); 0.20 core-h (unloaded independent recomputation) & 26.6 core-h / 9.88 core-h, 0 solutions, 0 unresolved & complete, machine-auditable \\
Lemmas \ref{lem:disc}+\ref{lem:qr} where whole-leaf elimination applies & $\approx 0$ (a single residue table) & 49/49 minimal cross sections + 8/49 dominant branches & theorem-level, Lean kernel-verified \\
\bottomrule
\end{tabular}
\end{center}

Two remarks: (i) for robustness the final program still retains the full exact computation in the survivor domain as an authority layer (the QR conclusions are used only for branch closure and cross-checking, not for skipping the scan), so 26.6 core-h is an upper bound ``with QR scan-skipping not enabled''. In the design of the $\omega = 8$ elimination, QR serves as front-end pruning that directly reduces the scan volume. (ii) The 70--83 core-h/leaf of the second-generation engine is intermediate data from the development process and does not represent the speed of the method itself. With all lemmas in place, the elimination accounting is 0.54 core-h/leaf, and the independent recomputation (local unloaded machine, per-leaf timing, in full four-field agreement per leaf with the elimination results) measured 0.20 core-h/leaf, 9.88 core-h over 49 leaves. The difference between the two comes from load on one and the same machine (the elimination ran while other work contended for cache and memory bandwidth and with restarts, the recomputation on an idle machine in a single batch), and 0.20 core-h/leaf may be taken as a reference cost for the method in that environment. Under either convention, the speedup ratio between the double scan and the three lemmas is of order $10^6$.

\subsection{Verification of the pilot}\label{sec:verif49}

For an elimination this large and this automated, correctness is guaranteed by four independent layers:

\begin{itemize}
\item \textbf{Exact-arithmetic layer}: integer and fractional arithmetic throughout, zero floating point.
\item \textbf{Planted-solution layer}: 12 hand-constructed true solutions (including large-value, high-exponent, and multivariate cases) were all captured, and the perturbed false solutions all rejected, so the sieve does not kill by mistake and the final check does not let one through (L3 of \S\ref{sec:49}).
\item \textbf{Dual-implementation layer}: two independently implemented engines produce identical window statistics over the same interval (for the first data block of leaf 0: 97799 surviving primes, 342307 window-solver calls, 12771 hits, all three matching item by item). In addition, the 49 leaves were fully and independently recomputed on another 14-core local machine (the number of solutions, the number of unresolved cases, coverage continuity, and the number of surviving primes compared per leaf across all four indicators, 49/49 in full agreement, totaling 9.88 core-h).
\item \textbf{Lean-kernel layer}: Lemmas \ref{lem:disc}, \ref{lem:qr} and their kill rule, and the instance layer (including the whole-leaf elimination theorem \texttt{deep\_leaf\_dead} of \S\ref{sec:lem2}) pass the Lean kernel check (module \texttt{QP/QRSieve.lean} of the \texttt{qp\_lean} project, 31 theorems with zero \texttt{sorry} in the final version of the library). Lemma \ref{lem:resolver} has also been formalized (\texttt{QP/Resolver.lean}, 31 theorems with zero \texttt{sorry}, covering the affine identity, univariate equivalence and uniqueness, bivariate M\"obius equivalence and cross-multiplied monotonicity, the envelope upper-bound lemma, and the full verification of the two instances \S\ref{sec:lem3apps}(a)(b), including the universal envelope truncation for $V_1 \ge 81$ and the uniqueness of even powers of 5 within the envelope; the trivariate truncation (iii) corresponding to \S\ref{sec:lem3apps}(c) is not included in the formalization). Neither module depends on mathlib, and the \texttt{\#print axioms} audit contains only the three standard axioms, with no \texttt{native\_decide}. All three lemmas thus pass the Lean kernel check (62 theorems with zero \texttt{sorry} across the two modules); \S\ref{sec:trust} gives the inventory of the full library.
\end{itemize}

\subsection{The full campaign: from 381 stems to 79,751,212 deep leaves}\label{sec:campaign}

The reduction chain is as follows (figures from the computation ledger):

\begin{itemize}
\item Of the 381 stems, 362 shallow stems were eliminated directly by whole-stem enumeration (139 core-h on the books).
\item The remaining 19 hard stems were subdivided into 171 first-level shards, of which 12 stems totalling 125 shards were eliminated (248 core-h on the books), peeling off 12,950 overflow deep leaves along the way, all killed by the QR sieve.
\item The remaining 7 deep stems collapsed to 30,770 coarse three-free labels. Three-free labels admit no algebraic shortcut. The QR discriminant can only eliminate the innermost prime $q$ algebraically, and the outer two free primes must be scanned one by one (this ``conservation of work'' was confirmed both experimentally and theoretically, see the acceleration synthesis report in reference \cite{internal}). The coarse labels were expanded with threshold $\varepsilon < 10^{-4}$ into \textbf{79,751,212 two-free deep leaves} (200 workers, 94.9 minutes).
\item \textbf{Two-level splitting of the deep leaves}: the main QR pass scanned off the bulk of 64,137,371 leaves with $\varepsilon \ge 10^{-5}$, while instantly peeling the deep tail of 15,613,841 leaves into a holding area. The deep tail was then handled with the scan bound $\PSUP \le 10^8$ as the divider: the two-machine pipeline killed 15,557,687 leaves, and the extremely deep 56,154 leaves (in fact a single Fermat-prime-family structure, with $\PSUP$ reaching $2^{33}-1$ at the deepest) were set aside as xdeep for the algebraic-kill pipeline.
\item \textbf{The xdeep endgame}: FULLKILL\_9 (Lemma \ref{lem:fullkill}) killed 2,646 leaves with zero scanning, the GPU tight-kill (Lemma \ref{lem:tightkill}) killed 53,508, and there were 0 survivors.
\end{itemize}

\textbf{Ledger closure}: $64{,}137{,}371 + 15{,}557{,}687 + 56{,}154 = 79{,}751{,}212$, matching the expansion total exactly (diff 0). \textbf{Zero solutions throughout}: at no point in the stem enumeration, the expansion, the main QR pass, the deep leaves, or xdeep did any quasiperfect candidate appear. The burden was concentrated in a few deep stems, as expected from the corridor structure: seven deep stems contributed nearly $8 \times 10^7$ deep leaves, several hundred times the entire workload of the other 374 stems.

\subsection{Heterogeneous two-machine computation}\label{sec:hw}

The deep-leaf and xdeep stages ran on a heterogeneous two-machine pipeline. Machine A is a dual-socket Intel Xeon Platinum 8480+ server (112 physical cores / 224 threads, 256 GB RAM, about 200--220 threads in production); Machine B is a same-model dual Xeon with three NVIDIA RTX 6000 Ada Generation GPUs (48 GB each, 128 GB RAM). Both run Ubuntu 24.04 with Python 3.12.3 and sympy 1.14, and Machine B uses numba 0.61 (CUDA) on the GPU side. Full specifications and per-stage wall-clock are archived in \texttt{025\_hardware\_and\_timeline}. The GPU pipeline computes the double-double windows and a GPU Miller--Rabin pre-sieve on the cards, with hit candidates returned to the CPU resolver for exact integer decision. The single GPU machine sustained 116--200 leaves/s, exceeding the 58 leaves/s of the 200-core CPU cluster, with a combined two-machine peak of about 370 leaves/s. The GPU pipeline also served as an independent second implementation for the dual-implementation layer of \S\ref{sec:trust}.

Conservativeness of the GPU implementation: the GPU evaluates the inequality of Lemma \ref{lem:tightkill} in double-double arithmetic (about 106 bits of precision), and declares a kill only when the margin exceeds the analytic error bound times a safety factor of over one hundred. Every case near the boundary falls back wholesale to exact integer recomputation on the CPU. GPU numerical error can therefore only cost speed, never completeness. A related lesson: plain double is insufficient. The ratio suffers catastrophic cancellation near $2 - \varepsilon$, making the whole domain undecidable, and double-double is a necessity.

\subsection{Execution of the four-layer scheme of computational trust}\label{sec:trust}

\begin{enumerate}
\item \textbf{Exact-arithmetic layer}: every kill conclusion ultimately rests on exact integer arithmetic. The GPU floating point serves only as a conservative pre-sieve and never acts alone as grounds for a kill (the xdeep tight-kill decisions go through the DD margin argument, see the end of \S\ref{sec:hw}).
\item \textbf{Planted-solution layer}: 24 planted true solutions at the CPU gate and 24 at the GPU gate were all detected, with 6 + 6 perturbed negative controls and zero false positives. The plants cover scan-prime branches $b \in \{1, \dots, 4\}$ and window exponents $c \in \{1, 2, 3\}$, and the true solutions with $c \ge 2$ specifically exercised the complete path ``tight-kill does not kill $\to$ resolver detects'' (the same design as L3 of \S\ref{sec:49}).
\item \textbf{Dual-implementation layer}: the two independent implementations, CPU and GPU, produce output identical word for word on the same slices (70 leaves, 210 slices, 7,252,079 primes, 0 mismatches), with multiple rounds of cross-checking during production showing 0 mismatches, plus sampled shards recomputed identically by both implementations.
\item \textbf{Lean-kernel layer}: the three base lemmas together with the theorems \texttt{mod9\_gate\_sound} and \texttt{deep\_leaf\_dead}, among others (\S\ref{sec:lemmas}, \S\ref{sec:verif49}), plus the Lean skeleton of the $\omega \ge 8$ argument: the axiom-free composition \texttt{nothree\_omega7\_dead} of \texttt{QP/NoThree.lean}, which is the route taken here, and, for historical comparison, \texttt{omega\_ge\_8} of \texttt{QP/Omega8.lean}, which reaches the same conclusion from the Hagis--Cohen axiom named in (ii) below. The Lean formalization of the two endgame lemmas is complete: \texttt{qhi\_tight\_kill} for Lemma \ref{lem:tightkill} (\texttt{QP/TightKill.lean}, together with the full q-exponent form \texttt{tight\_kill\_qexp} and the multi-pin composition \texttt{ratio\_mul}, reusing the ratio monotonicity \texttt{geomSum\_ratio\_mono} of \texttt{QP/Squeeze.lean}) and \texttt{fullkill9\_sound} for Lemma \ref{lem:fullkill} (\texttt{QP/FullKill9.lean}, the state-transition version of \texttt{mod9\_gate\_sound}, with the state machine \texttt{state9} proved correct by induction and the 18-class table over 20,808 configurations verified exhaustively by kernel \texttt{decide}). Both carry zero \texttt{sorry}, only kernel \texttt{decide} is used without \texttt{native\_decide}, and an honesty check confirms that a planted true solution (101,311) is not killed.

\emph{Inventory of the library} (measured on the final version, 2026-07-26): the \texttt{qp\_lean} project contains 19 modules and \textbf{259 theorems and lemmas in total, with zero \texttt{sorry}}; \texttt{lake build} passes on the whole library. Beyond the modules already named, the library includes \texttt{QP/NoThree.lean} (42 theorems: the $3 \nmid n$ stem-uniqueness inequalities of \S\ref{sec:nothree}, the composed eliminations \texttt{nothree\_omega7\_dead} / \texttt{nothree\_omega8\_dead}, and the unconditional deaths for $3 \nmid n$, $\omega \in \{4,5,6\}$), \texttt{QP/LemmaA.lean} (7 theorems, the joint-window single-candidate prefilter used by the accelerated verification engines of \S\ref{sec:ledgers}), and \texttt{QP/PinnedWindow.lean} (8 theorems, the pinned-window uniqueness lemma behind the single-candidate refinement). The formalization discipline is uniform: no mathlib, kernel \texttt{decide} only, no \texttt{native\_decide}. Honest annotations on the library's trust structure: (i) the external enumerations themselves (the 381-stem campaign, the single $3 \nmid n$ stem) are not replayed inside Lean --- they enter the composed theorems as explicitly stated audit-layer hypotheses (\texttt{EnumKill7}, \texttt{EnumKillNoThree7}), which is exactly the two-layer structure ``lemmas Lean-verified, enumeration externally audited'' declared throughout this paper; (ii) the module \texttt{QP/Omega8.lean} retains one explicitly named axiom, \texttt{hagis\_cohen\_thm2\_weak}, encoding the weak form of Hagis--Cohen Theorem 2, and its historical composition theorem \texttt{omega\_ge\_8} does depend on it; the \texttt{QP/NoThree.lean} route makes that axiom unnecessary, and every other theorem cited in this paper is free of it --- the \texttt{\#print axioms} audits of \texttt{nothree\_omega7\_dead}, \texttt{nothree\_omega8\_dead}, \texttt{qhi\_tight\_kill}, \texttt{fullkill9\_sound}, \texttt{deep\_leaf\_dead} and \texttt{mod9\_gate\_sound} return only the standard axioms.

All four layers are therefore closed, the $3 \nmid n$ stem of \S\ref{sec:nothree} included: its dual-implementation cross-check by a from-scratch second program is described there.
\end{enumerate}

\subsection{Computing resources, timeline, and the seven independent ledgers}\label{sec:ledgers}

\begin{center}\footnotesize
\begin{tabular}{p{2.6cm}p{6.4cm}p{4.6cm}}
\toprule Time (2026) & Event & Scale \\
\midrule 7/8 03:42--05:17 & Expansion of the three-free coarse labels & 30,770 coarse labels $\to$ 79,751,212 deep leaves, 200 workers, 94.9 minutes \\
7/8 05:27 & Main QR launched (200 cores, through 3 architecture iterations) & --- \\
7/10 00:18 & Main QR completed & 64,137,371 leaves killed, deep tail of 15,613,841 peeled off \\
7/10 00:19 & Deep-leaf stage launched (200 CPU cores) & --- \\
7/10 09:21 & GPU accelerator online (three cards) & combined two-machine peak about 370 leaves/s \\
7/10 22:20 & Deep-leaf stage completed & 15,557,687 killed, 56,154 extremely deep set aside \\
7/10 22:21--23:19 & xdeep endgame on three cards & all 56,154 killed, 0 survivors, 0 solutions \\
\bottomrule
\end{tabular}
\end{center}

Resource magnitudes (booked figures from the ledger, those marked (estimated) inferred from core counts and wall time): the stem-reduction stage booked about 400 core-h. The main QR pass was about 200 cores $\times$ 37 hours $\approx 7 \times 10^3$ core-h (estimated). The deep-leaf stage used about $4 \times 10^3$ CPU core-h (estimated) and about 40 GPU-hours across the three cards. The xdeep stage used about 3 GPU-hours. For comparison, without the three-lemma system and the two endgame lemmas, the base account of the xdeep segment alone is about $1.1 \times 10^4$ core-h (extrapolated from a measured census of $6.72 \times 10^{11}$ prime checks), whereas it actually completed at the scale of one GPU-hour.

\textbf{Seven independent ledgers.} Beyond the production run, the elimination of ``$3 \mid n$ and $\omega = 7$'' was subsequently re-established end to end six more times, under three distinct algorithmic architectures, so that the main theorem now rests on \textbf{seven separately closed ledgers}, every one of which closed with zero solutions and exact conservation (no leaf or segment lost, difference 0):

\begin{center}\footnotesize
\begin{tabular}{p{0.4cm}p{2.9cm}p{4.6cm}p{5.2cm}}
\toprule \# & Ledger (dates, 2026) & Architecture & Account \\
\midrule 1 & Production run (7/8--7/10) & corridor enumeration + QR sieve + resolver (this paper) & 79,751,212 leaves, closure diff 0, \S\ref{sec:campaign} \\
2 & Calibration rerun (7/11--7/14) & GPU double-double line + CPU line, sha256-anchored expansion & 72,738,115 residual deep leaves diff 0; 53,508 xdeep re-judged leaf-by-leaf by exact CPU arithmetic (xrev, $\approx$ 58 h), zero disagreement \\
3 & HC$^2$ confirmation (7/11--7/15) & interval propagation \`a la Hagis--Cohen (independent third method: the last free prime solved as a width-$O(1)$ exact rational interval, certificate stream per leaf) & all 381 stems, $\approx$ 902 core-h measured \\
4 & verify2 (7/15--7/16) & HC$^2$ under a unified adaptive-recursion controller, different machine & 960.85 core-h, 2,419,400 segments, conservation diff 0 \\
5 & verify3 (7/16--7/17) & same, plus a congruence-gate experiment & 1,119.88 core-h, 2,558,184 segments, diff 0 \\
6 & verify4 (7/19--7/20) & same, coarse granularity + combined engine & 348.12 core-h, 7,517 segments, diff 0 \\
7 & verify5 (7/25) & same, single-candidate window refinement & 54.62 core-h, 2,983 segments, diff 0 \\
\bottomrule
\end{tabular}
\end{center}

Ledger 2 was driven by a different model, with the expansion output byte-for-byte sha256-anchored to the first run; wall-clock to establishing the result was about 24 hours. Its GPU double-double line is an implementation independent of the first run's, while its CPU line reuses the production engine, so what it adds is a second decision procedure over the same leaves rather than a second program at every stage. Ledgers 3--7 are of a genuinely different mathematical architecture (interval propagation rather than scanning), so the same theorem is held by three kinds of account book --- corridor+QR leaf ledgers, GPU double-double leaf ledgers, and HC$^2$ certificate streams --- agreeing pairwise wherever they overlap. Two qualifications belong here. Ledgers 4--7 run the HC$^2$ engine of ledger 3 under different scheduling and with the accelerations described below, so among themselves they are independent runs and independent accounts rather than independent implementations. And all three architectures end at the same closed-form solve for the last free prime: that step is the unique algebraic solution of the leaf equation, so there is no room for a genuinely different one, and the independence asserted here is independence of the enumeration and of the kill decisions, not of that final algebraic step. The 17.6-fold cost reduction from ledger 4 to ledger 7 comes from an engine-acceleration chain developed inside the project (a joint-window single-candidate prefilter, a gated Miller--Rabin prime chain, and a pinned-window uniqueness lemma justifying a single-candidate window); each link was adjudicated for soundness before deployment, the prefilter and window lemmas are Lean-formalized (\texttt{QP/LemmaA.lean}, \texttt{QP/PinnedWindow.lean}, \S\ref{sec:trust}), and every accelerated pass reproduced zero solutions with exact closure. The adjudication records are in the internal acceleration and verification reports (reference \cite{internal}). Full per-stage timing and compute are archived in \texttt{025\_hardware\_and\_timeline}.

\section{The fine structure of $\omega = 8$: divisibility by 255}\label{sec:255}

The sequel campaign whose reduction boundary was settled in \S\ref{sec:red8} has a natural first instalment, far cheaper than the full elimination of the 516 stems and already carrying a structure theorem: Theorem \ref{thm:255}. It is reported here for reasons of timeliness and priority. At the date of this version the computation is not finished --- by stem count, 302 of the 341 stems concerned were closed at the ledger snapshot of 1--2 August 2026, roughly ninety percent, with zero solutions and zero deferred cases throughout every closed unit --- so the theorem is stated with that qualification, and this section will be brought to final, unconditional form in a revised version once the ledger closes.

\subsection{Reduction and computational status}\label{sec:255status}

The case $3 \nmid n$, $\omega = 8$ is already closed (\S\ref{sec:red8}: two stems, 13.1 single-core seconds, 0 solutions, 0 unresolved), so only the wing $3 \mid n$ with its 516 stems remains. These split by whether the three smallest primes of the stem are $(3, 5, 17)$: 175 stems have that prefix and 341 do not. Eliminating all 341 leaves $(3, 5, 17, q)$ as the only possible prefix of the four smallest prime factors, which is the theorem; the 175 surviving stems are not touched by this instalment and remain the body of the $\omega \ge 9$ campaign proper.

Ranked by an analytic cost census of the 516 stems (back-tested against the closed $\omega \ge 8$ ledger, rank correlation 0.898), the 341 stems split into 268 ordinary, 55 expensive, 12 very expensive, and 6 monster stems. Status at the snapshot:

\begin{itemize}
\item \textbf{Ordinary (268): closed.} Every checkpoint audited individually, 0 solutions and 0 deferred in each; the census tier they belong to closed in full (332 of 332 stems, including its $(3,5,17)$ members).
\item \textbf{Expensive (55): mostly closed.} 32 closed, plus 5 whose computation is complete and which await a bookkeeping merge; the remaining 18--23 are long-running units still in production.
\item \textbf{Very expensive (12): in production} on a heterogeneous two-machine pipeline.
\item \textbf{Monster (6): two killed by an $O(1)$ certificate, four in production.} The stems $(3,7,11,23)$ and $(3,7,13,17)$ die with no scanning at all. A three-layer certificate --- dual-engine replay of the full 197-node search tree with zero prime scans; gate ablation showing the emptiness is not a scheduling artifact; and an engine-independent proof in exact rational arithmetic --- establishes that their minimal admissible partial abundancy (every stem prime at its smallest admissible exponent) is already $2.00409\ldots$ and $2.00550\ldots \ge 2$, which forces $q_1 > 2n$ for the smallest free prime and contradicts $q_1^2 \mid n$. The root cause is a congruence gate: $3 \mid \sigma(7^{2a})$ exactly when $a \equiv 1 \pmod 3$, so the dominant branch of both stems must raise the exponent of 7 from 1 to 2, at an abundancy cost of $8/2401 \approx 3.3 \times 10^{-3}$, which exceeds their entire corridor slack. Total certificate cost: $1.1 \times 10^{-2}$ single-core seconds.
\end{itemize}

The one substantive long pole is the monster stem $(3,5,19,97)$. A stratified dry census prices the four remaining monster stems at $(1.398 \pm 0.18) \times 10^{14}$ scan obligations, of which 97.4\% sit in this single stem, and measured production unit prices put the four-stem total near $31{,}800$ core-hours (an extrapolation from measured prices and the sampled census; the logarithmic-integral approximation underlying the census is known from one end-to-end check to understate counts by about 11\%). Expected closeout: [PENDING: idx180 closeout date].

The companion bound $n > 10^{26.38}$ comes from a per-stem lower-bound scan of all 516 stems in exact arithmetic (trial division to depth $20{,}000$ for the exponent minima), the bound of a stem being the smallest $n$ its admissible prime and exponent minima allow. It is quoted here in its conservative form. The scan was made at the point where the ordinary and expensive tiers were closed and the monster tier was not: over the stems still open at that point the minimum is $10^{26.38}$, attained at $(3,7,13,17)$, and that gatekeeper value has been re-derived by hand. The two $O(1)$ certificates above remove $(3,7,13,17)$ and $(3,7,11,23)$ themselves, so the gatekeeper moves; once the campaign of this section closes, every stem whose prefix is not $(3,5,17)$ is gone, the minimum over the survivors becomes $10^{31.26}$, attained at $(3,5,17,127)$, and the companion bound may be restated in that sharper form. We keep the weaker number in Theorem \ref{thm:255} because the scan has so far been carried out by a single implementation and awaits an independent re-check --- the same in-progress evidence grade as the rest of this section.

\subsection{Method and trust framework}\label{sec:255method}

The trust framework is unchanged from \S\ref{sec:comp}: exact arithmetic at every kill, planted-solution gates, dual implementations, and ledger conservation; the engine underneath is the accelerated generation of ledger 7 (\S\ref{sec:ledgers}), that is, the joint-window single-candidate prefilter together with the single-candidate window justified by the pinned-window uniqueness lemma. Three additions were made for this campaign. First, a two-level GPU pipeline in the style of \S\ref{sec:hw} (double-double one-sided conservative kill on the cards, with every kill re-established or refuted by exact CPU arithmetic) runs beside a pure-CPU line, the two serving as mutual cross-checks. Second, pricing is by dry census: before production, a phase-1-only pass measures the exact number of scan obligations every task will generate --- the measured ratio of obligations to the analytic census varies by a factor of 445 across stems, so model pricing was replaced by measurement. Third, the ledger conservation identities were extended from three to five: a concurrency audit found that counting-based identities alone constrain neither identifier uniqueness nor coverage --- at high worker counts a task-identifier race could silently drop split-off subtasks while every counting identity stayed green --- and after the fix (an atomic identifier allocator) the ledger is guarded by C4 (identifier uniqueness) and C5 (a seamless-coverage audit of every split tree, endpoints matched exactly). The planted-solution gates (40 planted true solutions per line, zero false kills, perturbed negative controls silent) are rerun on each line before production. A full account of the campaign engineering is deferred to a separate report.

\section{Concluding remarks and open problems}\label{sec:concl}

None of the three lemmas goes beyond elementary algebra: the quadratic formula, the residues of a square, the monotonicity of a fractional-linear function. Their value is that each turns a class of unbounded or very large-scale search into a finite decision with an accompanying proof, along the division of labour set out in the table of \S\ref{sec:lemmas}. This bears out a rule of thumb that recurs throughout this project, that the resolution of a computational bottleneck lies at the mathematical layer rather than the engineering one: what reduces $8 \times 10^8$ scans to an expected $0.81$ candidate is the discriminant of Lemma \ref{lem:disc} rather than additional cores.

Conceptually, $y^2 = \Delta(p)$ (when $\Delta$ is squarefree) defines a genus-1 curve, and Lemma \ref{lem:qr} is precisely the local solvability test for this curve (the local part of descent). Each deep leaf has, a priori, only finitely many integer points by Siegel's theorem \cite{siegel}. This also points to the next arsenal: should the surviving candidates of some leaf prove hard to exclude in the future, the entire elliptic-curve toolchain (integral-point algorithms, Chabauty, and so on) is available.

\emph{The sequel.} The next target is $\omega \ge 9$, whose reduction boundary is settled in \S\ref{sec:red8}: 516 stems, at the scale of $10^9$--$10^{10}$ leaves, more than a hundred times the present campaign \cite{sequel}. The technical reserves are already in place from this campaign: the branch-collapse kernel (compressing the repeated full window computations per prime into a single M\"obius division plus a certificate, with an anticipated 10--100$\times$ speedup once implemented in C/GMP, the certificate not yet rigorous), limb-based entry filtering, the complete GPU double-double pipeline, and the two endgame kill lemmas, Lemma \ref{lem:fullkill} and Lemma \ref{lem:tightkill} (the applicability of Lemma \ref{lem:tightkill} to deep leaves outside the Fermat prime family remains to be assessed). Each of these reserves requires its own rigorization and Lean formalization before use at proof level. The first instalment (\S\ref{sec:255}) is under way.

\emph{The wall, and an open algorithmic problem.} Within the 516 stems the cost distribution is extremely skewed, and a single object dominates the sequel: the Fermat-cascade stem $(3,5,17,257) \to 65537$, internally ``stem24''. A dedicated multi-round audit of this stem (three rounds of five-angle attack across two model families, including fully blind rounds --- 15 independent entrances --- plus dedicated drills on the surviving proposals) priced it, within the current technique class of exact lattice-point certificates plus fixed-precision one-sided sieving, at $10^{9.3 \pm 0.5}$ core-hours with an in-class floor near $10^{8.0}$, against a realistic budget of $10^{4\text{--}5}$. None of the 15 entrances found a complexity-class exit, and 12 of them independently reduced the obstruction to the same missing primitive, which we record as an open problem:

\smallskip
\noindent\textbf{P2-EXACT} (open problem). \emph{Given $P \in \mathbb{Z}[j]$ of degree $k \ge 2$ and integers $M, T, \ell$, decide in $\mathrm{polylog}(M\ell)$ time whether $\#\{0 \le j < \ell : (P(j) \bmod M) \le T\} = 0$.}
\smallskip

For $k = 1$ this is the classical floor-sum algorithm in $O(\log)$ time; for $k = 2$ it is equivalent to counting lattice points in thin strips under a Dirichlet hyperbola, where the best known exact algorithm is Sladkey's $O(n^{1/3})$ \cite{sladkey} --- precisely the exponent at which our certificates plateau. The remaining escape is an algorithmic hypothesis (output-sensitive enumeration of near-lattice points, in the lineage of Elkies \cite{elkies}), which is a statement about unwritten algorithms rather than unknown number theory. The full adjudication is archived in the project's synthesis reports (reference \cite{internal}).

\section*{Acknowledgements}

The mathematics and computation reported in this paper were produced in an AI-assisted workflow, and the division of labour should be stated plainly. AI systems (Anthropic's Claude, principally the Fable 5 and Opus model series, orchestrated in a multi-agent branch--synthesize--drill workflow with mandatory machine verification) carried out the computations, discovered and proved the lemmas, wrote and checked the Lean formalizations, ran the independent verification ledgers, and drafted the text. The human authors directed the campaign, made the methodological and editorial decisions, reviewed the results, and take responsibility for the content. Errors found at every stage were caught by the layered verification scheme described in \S\ref{sec:verif49} and \S\ref{sec:trust} rather than by any single pass, and the paper's trust claims are meant to be read in that light: what is asserted is exactly what the ledgers, the planted-solution gates, and the Lean kernel have checked.

\section*{Data and code availability}

The elimination engines (Python), the raw ledgers of all seven verification runs, the Lean library (\texttt{qp\_lean}, 19 modules, 259 theorems, zero \texttt{sorry}), the Zemann audit, and the operating procedures are archived by the project. A public repository is in preparation; until it is online, all code and data are available from the authors on request.

\vspace{1em}
\noindent\emph{Note: the raw records of all measured figures in this paper can be checked. The microbenchmarks (1.1 $\mu$s / 0.26 $\mu$s / 0.64 ms) were measured on the real $(N, A)$ of leaf 0 in the engine's own bookkeeping, in which the pinned prime power $1381^{2a}$ is carried separately, so that $N$ has 38 decimal digits and $\Delta$ has 111; with the pinned power folded in, as in the whole-leaf display of \S\ref{sec:lem2}, $N$ has 69 digits and $\Delta$ about 174. The overall elimination accounting (26.6 core-h, $7{,}824$ deferrals, 12 sets of planted tests) is in the 49-leaf final report and \texttt{scripts/qp\_qrsieve\_results.jsonl}. The two teaching instances of Lemma \ref{lem:resolver} ($N = 59, A = 79$ and $N = 60, A = 67$) were constructed by script and verified exactly.}


\begin{thebibliography}{9}

\bibitem{cattaneo} P. Cattaneo, \emph{Sui numeri quasiperfetti}, Boll. Un. Mat. Ital. (3) \textbf{6} (1951), 59--62. (A quasiperfect number must be an odd perfect square.)

\bibitem{abbott} H. L. Abbott, C. E. Aull, E. Brown, D. Suryanarayana, \emph{Quasiperfect numbers}, Acta Arith. \textbf{22} (1973), 439--447; corrigendum Acta Arith. \textbf{29} (1976), 427--428. (Theorem 3: $3 \nmid n \Rightarrow \omega \ge 8$.)

\bibitem{hagiscohen} P. Hagis Jr., G. L. Cohen, \emph{Some results concerning quasiperfect numbers}, J. Austral. Math. Soc. Ser. A \textbf{33} (1982), 275--286. ($\omega \ge 7$ and $n > 10^{35}$; its per-prime congruence (CRT) elimination is a precursor to Lemma \ref{lem:qr}.)

\bibitem{siegel} C. L. Siegel, \emph{\"Uber einige Anwendungen diophantischer Approximationen}, Abh. Preuss. Akad. Wiss., Phys.-math. Kl. (1929), Nr. 1; see also Gesammelte Abhandlungen I, 209--266. (Finiteness of integer points on curves of genus $\ge 1$.)

\bibitem{internal} Internal materials of this project: the popular exposition (\texttt{025\_QP\_paper\_popular}), the QR descent-sieve technical report (\texttt{025\_QR\_descent\_paper}, with \S9 an addendum on Lemma \ref{lem:resolver}), the $\omega$=3/4 elimination manual (\texttt{025\_elimination\_manual}), the 49-leaf final report (\texttt{025\_QR\_rescan\_report}), the Lean library \texttt{math/qp\_lean/}, and the engine and checkpoints \texttt{problems/scripts/}. Archives of the full $\omega \ge 8$ computation: the elimination ledger (\texttt{025\_omega8\_ledger}), the operating procedure (\texttt{025\_omega8\_current\_procedure}), the hourly progress table (\texttt{025\_qr\_bigtable}), the acceleration synthesis adjudication (\texttt{025\_speed\_synthesis}), the xdeep GPU report (\texttt{025\_xdeep\_gpu\_report}), the hardware and timeline archive (\texttt{025\_hardware\_and\_timeline}), the $3 \nmid n$ reduction-boundary memorandum (\texttt{025\_omega9\_step0\_boundary}, with full reproduction commands), the $\omega \ge 9$ pricing synthesis (\texttt{025\_omega9\_D\_bsd4\_FINAL\_SYNTHESIS}), the Zemann audit (\texttt{zemann\_audit\_report}), and the Lean close-out reports (\texttt{025\_lean\_closeout\_report}, \texttt{025\_resolver\_lean\_report}). Available from the authors.

\bibitem{sequel} The sequel to this paper (in preparation): targets $\omega \ge 9$, that is, the elimination of ``$3 \mid n$ and $\omega = 8$'' (516 stems), using the technical reserves of \S\ref{sec:concl}.

\bibitem{alekseyev} M. A. Alekseyev, \emph{Computing bounded solutions to linear Diophantine equations with the sum of divisors}, arXiv:2601.17832 (2026). (Raises the numerical lower bound for quasiperfect numbers from $10^{35}$ to $10^{45}$.)

\bibitem{zemann} B. Zemann, \emph{Quasi-perfect numbers have at least 8 prime divisors}, viXra:2308.0024 (2023). (A prior computation without peer review; for the coverage gap see the discussion in \S\ref{sec:intro} and this project's audit archive \texttt{zemann\_audit\_report}.)

\bibitem{sladkey} R. Sladkey, \emph{A successive approximation algorithm for computing the divisor summatory function}, arXiv:1206.3369 (2012). (The $O(n^{1/3})$ exact lattice-point count referenced by problem P2-EXACT.)

\bibitem{elkies} N. D. Elkies, \emph{Rational points near curves and small nonzero $|x^3 - y^2|$ via lattice reduction}, in Algorithmic Number Theory (ANTS-IV), Lecture Notes in Comput. Sci. \textbf{1838}, Springer (2000), 33--63. (Output-sensitive enumeration of near-lattice points.)

\end{thebibliography}
\end{document}